%
%
\documentclass[11pt]{amsart}
\usepackage{amsmath,amssymb}
\usepackage{amscd,eucal,amsthm}

\usepackage{hyperref}

\pagestyle{plain}
\mathsurround=1pt
\righthyphenmin=2
\tolerance=500

\addtolength{\topmargin}{-15pt}
\addtolength{\textheight}{30pt}
\addtolength{\oddsidemargin}{-20pt}
\addtolength{\evensidemargin}{-20pt}
\addtolength{\textwidth}{30pt}

\newtheorem{Theorem}{Theorem}

\newtheorem{Lemma}{Lemma}
\newtheorem{Fact}{Fact}
\newtheorem{Proposition}{Proposition}

\theoremstyle{remark}
\newtheorem*{Remark}{Remark}

\renewcommand{\to}[1][]{\xrightarrow{#1}}

\renewcommand{\gg}{{\mathfrak{g}}}
\renewcommand{\ll}{{\mathfrak{l}}}

\newcommand{\z}{{\mathfrak{z}}}

\newcommand{\hh}{{\mathfrak{h}}}
\renewcommand{\l}{{\lambda}}

\newcommand{\zz}{\mathbb{Z}}

\newcommand{\cc}{\mathbb{C}}

\newcommand{\A}{\mathcal{A}}
 
\DeclareMathOperator{\gr}{gr} 
 
 \DeclareMathOperator{\id}{id}

\DeclareMathOperator{\End}{End} 
\DeclareMathOperator{\Res}{Res} 
\DeclareMathOperator{\Spec}{Spec}
 \DeclareMathOperator{\rk}{rk}
\DeclareMathOperator{\Tr}{Tr} \DeclareMathOperator{\diag}{diag}
\renewcommand{\phi}{\varphi}

\newcommand{\ep}{\varepsilon}

\makeatletter
\def\@mult#1{\raise #1\rlap{$\cdot$}\lower #1\rlap{$\cdot$}\cdot}
\def\did{\mathrel{\@mult{3pt}}}
\makeatother
\def\openrow#1#2#3{\setbox0=\vbox{\hbox
    {\vrule height#2 width#3\kern#2\vrule height#2 width0pt}\hrule height#3}
    \hbox{\leaders\copy0\hskip#1\wd0\vrule width#3}}
\def\row#1#2#3{\vbox{\hrule height#3\openrow{#1}{#2}{#3}}}
\def\Yr#1{\row{#1}{1.5ex}{.1ex}}

\def\DY#1\endDY{\baselineskip=1ex\lineskip=0pt\lineskiplimit=0pt{\vcenter
    {\Yr#1}}}
\def\openclm#1#2#3{\setbox0=\vbox{\hrule height#3\hbox
    {\vrule width0pt\kern#2\vrule width#3 height#2}}\vtop
    {\leaders\copy0\vskip#1\ht0\hrule height#3}}
\def\clm#1#2#3{\hbox{\vrule width#3\openclm{#1}{#2}{#3}}}
\def\Yc#1{\clm{#1}{1.5ex}{.1ex}}

\def\CDY#1\endCDY{{\vcenter{\hbox{\Yc#1}}}}

\def\matrixSize{{r}}


\topmargin=-20mm
\textwidth=18.5cm
\oddsidemargin=-1.2cm 
\evensidemargin=-1.2cm

\author{A.~Chervov}
\address{Institute for Theoretical and Experimental
Physics, Moscow} \email{chervov@itep.ru}

\author{G.~Falqui}
\address{Universita degli Studi di Milano - Bicocca}  \email{E-mail:gregorio.falqui@unimib.it}

\author{L.~Rybnikov}
\address{Poncelet laboratory (Independent University of Moscow and CNRS) and Institute for Theoretical and Experimental
Physics, Moscow} \email{leo.rybnikov@gmail.com}

\title{ \hfill {\em ITEP-TH-78/06} \\ ~~\\
Limits of Gaudin algebras, quantization of bending flows, \\ ~~ \\ Jucys--Murphy elements and Gelfand--Tsetlin bases}

\begin{document}
\maketitle

\begin{abstract}
Gaudin algebras form a family of maximal commutative subalgebras
in the tensor product of $n$ copies of the universal enveloping
algebra $U(\gg)$ of a semisimple Lie algebra $\gg$. This family is
parameterized by collections of pairwise distinct complex numbers
$z_1,\dots,z_n$. We obtain some new commutative subalgebras in
$U(\gg)^{\otimes n}$ as limit cases of Gaudin subalgebras. These
commutative subalgebras turn to be related to the hamiltonians of
bending flows and to the Gelfand--Tsetlin bases. We use this to
prove the simplicity of spectrum in the Gaudin model for some new
cases.
\end{abstract}

\bigskip

{\footnotesize\sc Key words.} {\footnotesize Gaudin model, Bethe
ansatz, bending flows, Gelfand--Tsetlin bases.}

\tableofcontents

\section{Introduction}

Gaudin model was introduced in \cite{G1} as a spin model related
to the Lie algebra $sl_2$, and generalized to the case of
arbitrary semisimple Lie algebras in \cite{G}, 13.2.2. The
generalized Gaudin model has the following algebraic
interpretation. Let $V_{\lambda}$ be an irreducible representation
of $\gg$ with the highest weight $\lambda$. For any collection of
integral dominant weights $(\lambda)=\lambda_1,\dots,\lambda_n$,
let $V_{(\l)}=V_{\l_1}\otimes\dots\otimes V_{\l_n}$. For any
$x\in\gg$, consider the operator $x^{(i)}=1\otimes\dots\otimes
1\otimes x\otimes 1\otimes\dots\otimes 1$ ($x$ stands at the $i$th
place), acting on the space $V_{(\l)}$. Let $\{x_a\},\
a=1,\dots,\dim\gg$, be an orthonormal basis of $\gg$ with respect
to Killing form, and let $z_1,\dots,z_n$ be pairwise distinct
complex numbers. The hamiltonians of Gaudin model are the
following commuting operators acting in the space $V_{(\l)}$:
\begin{equation}\label{quadratic}
H_i=\sum\limits_{k\neq i}\sum\limits_{a=1}^{\dim\gg}
\frac{x_a^{(i)}x_a^{(k)}}{z_i-z_k}.
\end{equation}

We can treat the $H_i$ as elements of the universal enveloping
algebra $U(\gg)^{\otimes n}$. In \cite{FFR}, an existence of  a large commutative
subalgebra $\A(z_1,\dots,z_n)\subset U(\gg)^{\otimes n}$
containing $H_i$ was proved. 
For $\gg=sl_2$, the algebra
$\A(z_1,\dots,z_n)$ is generated by $H_i$ and the central elements
of $U(\gg)^{\otimes n}$. In other cases, the algebra
$\A(z_1,\dots,z_n)$ has also some new generators known as higher
Gaudin hamiltonians. Their explicit construction
for $\gg=gl_\matrixSize$ was obtained in \cite{T04}, see also \cite{CT04},\cite{CT06}.
(For $\gg=gl_3$ see \cite{CRT}).
The construction of $\A(z_1,\dots,z_n)$ uses
the quite nontrivial fact \cite{FF} that the completed universal
enveloping algebra of the affine Kac--Moody algebra $\hat{\gg}$ at
the critical level has a large center $Z(\hat{\gg})$. There is a
natural homomorphism from the center $Z(\hat{\gg})$ to the
enveloping algebra $U(\gg\otimes t^{-1}\cc[t^{-1}])$. To any
collection $z_1,\dots,z_n$ of pairwise distinct complex numbers,
one can naturally assign the evaluation homomorphism $U(\gg\otimes
t^{-1}\cc[t^{-1}])\to U(\gg)^{\otimes n}$. The image of the center
under the composition of the above homomorphisms is
$\A(z_1,\dots,z_n)$. We will call $\A(z_1,\dots,z_n)$ the
\emph{Gaudin algebra}.

The main problem in Gaudin model is the problem of simultaneous
diagonalization of (higher) Gaudin hamiltonians. The bibliography
on this problem is enormous (cf.
\cite{BF,Fr1,Fr2,FFR,FFTL,MV,MTV,MTV2}). It follows from the
\cite{FFR} construction that all elements of
$\A(z_1,\dots,z_n)\subset U(\gg)^{\otimes n}$ are invariant with
respect to the diagonal action of $\gg$, and therefore it is
sufficient to diagonalize the algebra $\A(z_1,\dots,z_n)$ in the
subspace $V_{(\l)}^{sing}\subset V_{(\l)}$ of singular vectors
with respect to $\diag_n(\gg)$ (i.e., with respect to the diagonal
action of $\gg$). The standard conjecture says that, for generic
$z_i$, the algebra $\A(z_1,\dots,z_n)$ has simple spectrum in
$V_{(\l)}^{sing}$. This conjecture is proved in \cite{MV} for
$\gg=sl_r$ and $\l_i$ equal to $\omega_1$ or $\omega_{r-1}$ (i.e.,
for the case when every $V_{\l_i}$ is the standard representation
of $sl_r$ or its dual) and in \cite{SV} for $\gg=sl_2$ and
arbitrary $\l_i$.

In the present paper we consider some limits of the Gaudin
algebras when some of the points $z_1,\dots,z_n$ glue together. We
obtain some new commutative subalgebras this way. We consider the
"most degenerate" subalgebra of this type. In the case of
$\gg=gl_N$, this subalgebra gives a quantization of the (higher)
"bending flows hamiltonians", introduced in \cite{FM03-1}.
Original bending flows were introduced in \cite{KM96}. They are related to $SU(2)$.
See \cite{Rag,FlashM01,FM03-2,FM04,FM06} for further developments.
Further, we
establish a connection of this subalgebra to the Gelfand--Tsetlin
bases via the results of Mukhin, Tarasov and Varchenko on
$(gl_N,gl_M)$ duality (cf \cite{MTV,MTV2}). This result was obtained first
in \cite{FlashM01} by different methods. We use this to prove
the simple spectrum conjecture for $\gg=gl_N$ and
$\l_i=m_i\omega_1,\ m_i\in\zz_+$.

The paper is organized as follows. In section~ \ref{sect_prelim} we
collect some well-known facts on Gaudin algebras. In
section~\ref{sect_limit} we describe some limits of Gaudin
algebras. In section~ \ref{sect_bendflows} we obtain quantum
hamiltonians of bending flows as limits of higher Gaudin
hamiltonians. In sections~ \ref{sect_jucis}~and~ \ref{sect_duality}
we establish a connection between quantum bending flows
hamiltonians and Gelfand--Tsetlin theory. Finally, in
section~ \ref{sect_simplespec} we apply our results and prove the
simple spectrum conjecture.

{\bf Acknowledgments} G.F. acknowledges support from the ESF
programme MISGAM, and the Marie Curie RTN ENIGMA. The work of A.C.
and L.R. has been partially supported by the RFBR grant
04-01-00702 and by the Federal agency for atomic energy of Russia.
A.C. has been partially supported by the Russian President Grant MK-5056.2007.1,
 grant of Support for the Scientific Schools 8004.2006.2,
and by the INTAS grant YSF-04-83-3396, by the ANR grant GIMP
(Geometry and Integrability in Mathematics and Physics), the part
of work  was done during the visits to SISSA (under the INTAS
project), and to the University of Angers (under ANR grant GIMP).
A.C. is deeply indebted to SISSA and especially to B. Dubrovin, as
well as to the University of Angers and especially to V. Rubtsov,
for providing warm hospitality, excellent working conditions and
stimulating  discussions. The work of L.R. was partially supported
by  RFBR grant 05 01 00988-a and RFBR grant 05-01-02805-CNRSL-a.
L.R. gratefully acknowledges the support from Deligne 2004 Balzan
prize in mathematics. The work was finished during L.R.'s stay at
the Institute for Advanced Study supported by the NSF grant
DMS-0635607. The authors are indebted to A. Vershik for the
interest in their work, to A. Mironov for useful discussion.

\section{Preliminaries}\label{sect_prelim}

\subsection{Construction of Gaudin subalgebras.} Consider the infinite-dimensional pro-nilpotent Lie
algebra $\gg_-:=\gg\otimes t^{-1}\cc[t^{-1}]$ -- it is a "half" of
the corresponding affine Kac--Moody algebra $\hat\gg$. The
universal enveloping algebra $U(\gg_-)$ bears a natural filtration
by the degree with respect to the generators. The associated
graded algebra is the symmetric algebra $S(\gg_-)$ by the
Poincar\'e--Birkhoff--Witt theorem. The commutator operation on
$U(\gg_-)$  defines the Poisson--Lie bracket $\{\cdot,\cdot\}$ on
$S(\gg_-)$: for the generators $x,y\in\gg_-$ we have
$\{x,y\}=[x,y]$. For any $g\in\gg$, we denote the element
$g\otimes t^{m}\in\gg_-$ by $g[m]$.

The Poisson algebra $S(\gg_-)$ contains a large
Poisson-commutative subalgebra $A\subset S(\gg_-)$. This
subalgebra can be constructed as follows.

Consider the following derivations of the Lie algebra $\gg_-$:
\begin{equation}\label{der1}
\partial_t(g[m])=mg\otimes t^{m-1}\quad\forall g\in\gg, m=-1,-2,\dots
\end{equation}
\begin{equation}\label{der2}
t\partial_t(g[m])=mg\otimes t^{m}\quad\forall g\in\gg,
m=-1,-2,\dots
\end{equation}
The derivations (\ref{der1}), (\ref{der2}) extend to the
derivations of the associative algebras $S(\gg_-)$ and $U(\gg_-)$.
The derivation (\ref{der2}) induce a grading of these algebras.

Let $i_{-1}:S(\gg)\hookrightarrow S(\gg_-)$ be the embedding,
which maps $g\in\gg$ to $g[-1]$. Let $\Phi_l,\ l=1,\dots,\rk\gg$
be the generators of the algebra of invariants $S(\gg)^{\gg}$.

\begin{Fact} \begin{enumerate} \item \cite{BD,FFR,Fr2} The subalgebra  $A\subset S(\gg_-)$ generated by the elements
$\partial_t^n \overline{S_l}$, $l=1,\dots,\rk\gg$,
$n=0,1,2,\dots$, where $\overline{S_l}=i_{-1}(\Phi_l)$, is
Poisson-commutative. \item There exist the homogeneous with
respect to $t\partial_t$ elements $S_l\in\A$ such that $\gr
S_l=\overline{S_l}$. \item $\A$ is a free commutative algebra
generated by $\partial_t^n S_l$, $k=1,\dots,\rk\gg$,
$n=0,1,2,\dots$.\end{enumerate}
\end{Fact}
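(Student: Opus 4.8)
The plan is to handle the three assertions in turn: reduce (1) to a generating-function identity that is a short direct computation, attribute (2) to the Feigin--Frenkel description of the centre at the critical level, and deduce (3) from (2) by a Poincar\'e--Birkhoff--Witt filtration argument together with the classical freeness of $A$. First I would package the $\partial_t^{\,n}\0{S_l}$ into a current: for $g\in\gg$ set $g(z)=\sum_{m\ges1}g[-m]\,z^{m-1}$, with coefficients in $\gg_-$. Since $\partial_t(g[-1])=-g[-2]$ and inductively $\partial_t^{\,n}(g[-1])=(-1)^n n!\,g[-1-n]$, the algebra automorphism $e^{z\partial_t}$ of $S(\gg_-)$ sends $g[-1]$ to $g(-z)$, so $\sum_{n\ges0}(\partial_t^{\,n}\0{S_l})\tfrac{z^n}{n!}=\Phi_l\!\big(x_\bullet(-z)\big)$, i.e.\ the element obtained from the polynomial $\Phi_l$ by substituting $x_a(-z)$ for $x_a$. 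Hence $A$ is precisely the subalgebra generated by all Taylor coefficients of the currents $\0{S_l}(z):=\Phi_l\!\big(x_\bullet(z)\big)$, $l=1,\dots,\rk\gg$, and assertion (1) is equivalent to the formal-power-series identity $\{\0{S_l}(z),\0{S_k}(w)\}=0$ for all $l,k$.

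For (1), from $\{g[-m],h[-n]\}=[g,h][-m-n]$ and the elementary identity $\sum_{m=1}^{k-1}z^{m-1}w^{k-1-m}=\frac{z^{k-1}-w^{k-1}}{z-w}$ one computes
\[
\{x_a(z),x_b(w)\}=\frac{[x_a,x_b](z)-[x_a,x_b](w)}{z-w}.
\]
Inserting this into the Leibniz expansion of $\{\0{S_l}(z),\0{S_k}(w)\}$ and separating the terms proportional to $x_c(z)$ from those proportional to $x_c(w)$, the first group is exactly the $\ad$-invariance relation for $\Phi_l$ evaluated at the current $x_\bullet(z)$, and the second is the $\ad$-invariance relation for $\Phi_k$ at $x_\bullet(w)$; both vanish because $\Phi_l,\Phi_k\in S(\gg)^{\gg}$. (Equivalently, (1) is the classical limit of the Feigin--Frenkel theorem, so one may simply cite \cite{BD,FFR,Fr2}.)

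Assertion (2) is where the real input sits, and I expect this to be the genuine obstacle. It rests on the Feigin--Frenkel description \cite{FF} of the centre $Z(\hat\gg)$ of the completed enveloping algebra of $\hat\gg$ at the critical level: via the Wakimoto free-field realization one identifies $Z(\hat\gg)$ with the joint kernel of the screening operators and thereby exhibits it as a free commutative algebra generated by $\rk\gg$ Segal--Sugawara vectors $S_1,\dots,S_{\rk\gg}$ together with all their $\partial_t$-derivatives, the $S_l$ being homogeneous for $t\partial_t$ (the conformal grading) with symbols the classical invariants $\0{S_l}$. Transporting $Z(\hat\gg)$ to $U(\gg_-)$ via the natural homomorphism $U(\gg_-)\leftarrow Z(\hat\gg)$ produces $\A$, and the images of the $S_l$ give the required homogeneous elements with $\gr S_l=\0{S_l}$; moreover $\A$ is generated by all $\partial_t^{\,n}S_l$. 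Unlike (1) and (3), this cannot be reached by symmetrization or by bare PBW bookkeeping — it genuinely needs the vertex-algebra machinery.

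Finally, for (3): I would first invoke the classical fact that the $\partial_t^{\,n}\0{S_l}$ ($l=1,\dots,\rk\gg$, $n\ges0$) are algebraically independent in $S(\gg_-)$, so that $A=\cc[\partial_t^{\,n}\0{S_l}]$ is a free polynomial algebra. This can be seen by writing a point of $\Spec S(\gg_-)$ as a formal current $\mu(z)=\sum_{m\ges1}\mu_m z^{m-1}$ with $\mu_m\in\gg^*$ and restricting to the finite-dimensional subspaces $\{\mu_m=0\text{ for }m>N\}$, on which only finitely many of the $\partial_t^{\,n}\0{S_l}$ survive and the independence reduces to a several-parameter argument-shift computation in the style of Mishchenko--Fomenko (invariants of $\gg$ shifted along a polynomial curve through a regular element), and then letting $N\to\infty$ — or one may just cite \cite{BD,Fr2}. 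Since $\partial_t$ preserves the PBW filtration of $U(\gg_-)$ and commutes with the passage to symbols, $\gr(\partial_t^{\,n}S_l)=\partial_t^{\,n}\0{S_l}$; because $\A$ is generated by the pairwise commuting elements $\partial_t^{\,n}S_l$, its associated graded $\gr\A$ is generated by the $\partial_t^{\,n}\0{S_l}$, and the algebraic independence of the latter prevents any cancellation of leading terms, forcing $\gr\A=A$. A filtered algebra whose associated graded is free polynomial, together with the given lifts of the polynomial generators, is itself free polynomial on those lifts; this is exactly (3).
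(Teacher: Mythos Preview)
The paper does not prove this statement: it is stated as a \emph{Fact} with citations to \cite{BD,FFR,Fr2} and no argument is given. So there is no ``paper's own proof'' to compare against; your task here was really to supply what the cited references contain, and your outline is a faithful sketch of that material. A few remarks on the individual parts.

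Your argument for (1) is correct and is essentially the standard $r$-matrix computation. One small point worth making explicit: when you say ``the first group is exactly the $\ad$-invariance relation for $\Phi_l$,'' the cancellation uses the total antisymmetry of the structure constants in an orthonormal basis (so that $c^c_{ab}=c^a_{bc}$), which is what turns $\sum_{a,c}\partial_a\Phi_l\cdot c^c_{ab}\,x_c$ into the infinitesimal invariance identity. This is implicit in your set-up via the Killing form, but a reader may stumble without it.

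Your treatment of (2) is honest: this is genuinely the Feigin--Frenkel theorem and cannot be done by elementary means. Note that the paper's own Remark immediately after the Fact says the same thing in different language (``$\A$ comes from the center of $U(\hat\gg)$ at the critical level by the AKS-scheme'').

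Your argument for (3) is correct. The PBW step ``algebraic independence of symbols $\Rightarrow$ no cancellation of leading terms $\Rightarrow$ the lifts are algebraically independent'' is the right mechanism; the only thing one might add is that $S(\gg_-)$ being a domain is what guarantees $\gr(ab)=\gr(a)\gr(b)$, so that the symbol of a monomial in the $\partial_t^{\,n}S_l$ is the corresponding monomial in the $\partial_t^{\,n}\0{S_l}$. Your sketch of the classical algebraic independence via restriction to truncated currents and Mishchenko--Fomenko is also the standard route (and is what underlies the transcendence-degree counts the paper uses later in Proposition~\ref{generators2}).
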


\begin{Remark} The generators of the subalgebra $A\subset S(\gg_-)$ can be described in the following
equivalent way. Let $i(z):S(\gg)\hookrightarrow S(\gg_-)$ be
the embedding depending on the formal parameter $z$, which maps
$g\in\gg$ to $\sum\limits_{k=1}^{\infty}z^{k-1}g[-k]$. Then the
coefficients of the power series $\overline{S_l}(z)=i(z)(\Phi_l)$
in $z$ freely generate the subalgebra $A\subset S(\gg_-)$.
\end{Remark}

\begin{Remark} The subalgebra $\A\subset U(\gg_-)$ comes from the center of
$U(\hat\gg)$ at the critical level by the AKS-scheme (see
\cite{FFR,ER,CT06} for details).
\end{Remark}

The Gaudin subalgebra $\A(z_1,\dots,z_n)\subset U(\gg)^{\otimes
n}$ is the image of the subalgebra $\A\subset U(\gg_-)$ under the
homomorphism $U(\gg_-)\to U(\gg)^{\otimes n}$ of specialization at
the points $z_1,\dots,z_n$ (see \cite{FFR,ER}). Namely, let
$U(\gg)^{\otimes n}$ be the tensor product of $n$ copies of
$U(\gg)$. We denote the subspace $1\otimes\dots\otimes
1\otimes\gg\otimes 1\otimes\dots\otimes 1\subset U(\gg)^{\otimes
n}$, where $\gg$ stands at the $i$th place, by $\gg^{(i)}$.
Respectively, for any $f\in U(\gg)$ we set
\begin{equation}
f^{(i)}=1\otimes\dots\otimes 1\otimes f\otimes
1\otimes\dots\otimes 1\in U(\gg)^{\otimes n}.
\end{equation}

Let $\diag_n:U(\gg_-)\hookrightarrow U(\gg_-)^{\otimes n}$ be the
diagonal embedding. For any collection of pairwise distinct
complex numbers $z_i, i=1,\dots,n$, we have the following
homomorphism:
\begin{equation}
\phi_{z_1,\dots,z_n}=(\phi_{z_1}\otimes\dots\otimes\phi_{z_n})\circ
\diag_n:U(\gg_-)\to U(\gg)^{\otimes n}.
\end{equation}

More explicitly, we have
$$\phi_{z_1,\dots,z_n}(g\otimes
t^m)=\sum\limits_{i=1}^nz_i^mg^{(i)}.$$

Set
$$
\A(z_1,\dots,z_n)=\phi_{z_1,\dots,z_n}(\A)\subset U(\gg)^{\otimes
n}
$$

\subsection{Quantum Mishchenko-Fomenko "shift of argument" subalgebras.}

Consider the subalgebra $\A(z_1,z_2)\subset U(\gg)\otimes U(\gg)$.
The associated graded quotient of $\A(z_1,z_2)$ with respect to
the second tensor factor is a commutative subalgebra
$\overline{\A(z_1,z_2)}\subset U(\gg)\otimes S(\gg)$. Any element
$\mu\in\gg^*=\Spec S(\gg)$ gives the evaluation homomorphism
$\id\otimes\mu:U(\gg)^{\otimes(n-1)}\otimes S(\gg)$. The image of
$\overline{\A(z_1,z_2)}$ under this homomorphism a commutative
subalgebra $\A_\mu\subset U(\gg)$, which does not depend on
$z_1,z_2$. This subalgebra is a quantum version of the
Mishchenko--Fomenko "shift of argument" subalgebra $A_\mu\subset
S(\gg)$ (see \cite{CT06,FFTL,MTV06,Ryb1}). The latter generated by
the derivatives (of any order) along $\mu$ of the generators of
the Poisson center $S(\gg)^{\gg}$, (or, equivalently, generated by
central elements of $S(\gg)=\cc[\gg^*]$ shifted by $t\mu$ for all
$t\in\cc$) \cite{MF}. In \cite{FFTL,Ryb1} it is shown, that
$\gr\A_\mu=A_\mu$. The algebra $A_\mu$ is a free commutative
algebra with $\frac{1}{2}(\dim\gg+\rk\gg)$ generators, and hence
has the maximal possible transcendence degree (see \cite{MF}).

\subsection{Talalaev's formula.} In~\cite{T04} D.~Talalaev constructed explicitly some
elements of $U(\gg)^{\otimes n}$ commuting with quadratic Gaudin
hamiltonians for the case $\gg=\gg\ll_r$. The formulas
of~\cite{T04} are universal, i.e. actually they describe a
commutative subalgebra of $U(\gg_-)$ which gives a commutative
subalgebra of $U(\gg)^{\otimes n}$ as the image of the
specialization homomorphism at the points $z_1,\dots,z_n$ (see
\cite{CT06}).

Namely, set
$$L(z)=\sum\limits_{1\le i,j\le
r}\sum\limits_{n=1}^{\infty}z^{n-1}e_{ij}[-n]\otimes e_{ji}\in
U(\gg_-)\otimes\End\cc^r,$$ where $z$ is a formal parameter, and
consider the following differential operator in $z$ with the
coefficients from $U(\gg_-)$:
$$
D=\Tr A_r\prod\limits_{i=1}^r(L(z)^{(i)}-\partial_z)=
\partial_z^r+\sum\limits_{k=1}^{r}\sum\limits_{n=1}^{\infty}Q_{n,k}z^{n-1
}\partial_z^{r-k}.\footnote{It was first observed in
\cite{MTVShap}, that one can use simply the column determinant to
define $D$, $D=det^{col} (L(z)-\partial_z)$, where $det^{col}
M=\sum_{\sigma} (-1)^{sigma} \prod M_{\sigma(i),i}$. This becomes
natural due to an observation of \cite{CF}, that $L(z)-\partial_z$
is a ``Manin's matrix'', so use of column-determinant is dictated
by Manin's general theory.}
$$

Here we denote by $L(z)^{(i)}\in
U(\gg_-)\otimes(\End\cc^r)^{\otimes r}$ the element obtained by
putting $L(z)$ to the $i$-th tensor factor, and $A_r$ denotes the
projector onto $U(\gg_-)\otimes\End(\Lambda^r\cc^r)\subset
U(\gg_-)\otimes(\End\cc^r)^{\otimes r}$. It follows from
\cite{CT06} that the elements $Q_{n,k}\in U(\gg_-)$ pairwise
commute.

In \cite{Ryb}, it is shown that
$\sum\limits_{n=1}^{\infty}Q_{n,k}z^{n-1 
}=S_l(z)$, and hence the
elements $Q_{n,k}\in U(\gg_-)$ generate the same commutative
subalgebra $\A\subset U(\gg_-)$.

\subsection{The generators of
$\A(z_1,\dots,z_n)$.}\label{subsect_generators} For our purposes,
we need some specific set of generators of $\A(z_1,\dots,z_n)$.
Let us describe it.

Consider the following $U(\gg)^{\otimes n}$-valued functions in
the variable $w$
$$
S_l(w;z_1,\dots,z_n):=\phi_{w-z_1,\dots,w-z_n}(S_l)
$$
Let $S_l^{i,m}(z_1,\dots,z_n)$ be the coefficients of the
principal parts of the Laurent series of $S_l(w;z_1,\dots,z_n)$ at
the points $z_1,\dots,z_n$. I.e. we set
$$S_l(w;z_1,\dots,z_n)=\sum\limits_{m=1}^{m=\deg\Phi_l}S_l^{i,m}(z_1,\dots,z_n)(z-z_i)^{-m}+O(1)\
\text{as}\ z\to z_i.$$

The following assertion is standard.

\begin{Proposition}\label{generators2}
\begin{enumerate}
\item The subalgebra $\A(z_1,\dots,z_n)$ is a free commutative
algebra generated by the elements $S_l^{i,m}(z_1,\dots,z_n)\in
U(\gg)^{\otimes n}$, where $i=1,\dots,n-1$, $l=1,\dots,\rk\gg$,
$m=1,\dots,\deg\Phi_l$, and
$S_l^{n,\deg\Phi_l}(z_1,\dots,z_n)=\Phi_l^{(n)}\in U(\gg)^{\otimes
n}$, where $l=1,\dots,\rk\gg$. \item The elements
$S_l^{i,m}(z_1,\dots,z_n)\in U(\gg)^{\otimes n}$ are stable under
simultaneous affine transformations of the parameters $z_i\mapsto
az_i+b$. \item All the elements of $\A(z_1,\dots,z_n)$ are
invariant with respect to the diagonal action of $\gg$. \item The
center of the diagonal $\diag_n(U(\gg))\subset U(\gg)^{\otimes n}$
is contained in $\A(z_1,\dots,z_n)$.
\end{enumerate}
\end{Proposition}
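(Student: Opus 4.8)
The plan is to derive all four assertions from the structural description of $\A\subset U(\gg_-)$ in the Fact and from the elementary properties of the specialization homomorphism $\phi_{z_1,\dots,z_n}$. The starting point is the observation that $S_l(w;z_1,\dots,z_n)=\phi_{w-z_1,\dots,w-z_n}(S_l)$ is, by construction, a rational function of $w$ with poles only at $w=z_1,\dots,z_n$ (since $\phi_{u_1,\dots,u_n}(g[-k])=\sum_i u_i^{-k}g^{(i)}$ and $S_l$ is a polynomial in the $g[-k]$), and that its value at $w=\infty$ together with the coefficients $S_l^{i,m}$ of its principal parts at the $z_i$ determine it completely. First I would make precise the dictionary between the ``global'' generators $\partial_t^nS_l$ of $\A$ and the ``local'' data $\{S_l^{i,m}\}$: applying $\phi_{w-z_1,\dots,w-z_n}$ to the generators $\partial_t^n\0{S_l}$ (equivalently, reading off Taylor/Laurent coefficients of $i(z)(\Phi_l)$ as in the Remark) shows that the specialization of $\A$ is spanned by the coefficients of $S_l(w;z_1,\dots,z_n)$ expanded at each $z_i$. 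A dimension/freeness count then identifies a minimal generating set.

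For part (1), I would argue as follows. The partial fraction decomposition of the rational function $S_l(w;z_1,\dots,z_n)$ expresses it as $\sum_{i=1}^{n}\big(\text{principal part at }z_i\big)+\big(\text{value at }\infty\big)$; since $S_l$ has $t\partial_t$-degree $\deg\Phi_l$, the pole at each $z_i$ has order at most $\deg\Phi_l$, and the regular part at $\infty$ is exactly $\phi_{\infty}(S_l)$ applied appropriately, which one computes to equal $\Phi_l^{(n)}$ — this is the reason $S_l^{n,\deg\Phi_l}=\Phi_l^{(n)}$ is singled out and the index $i$ runs only to $n-1$. Translation invariance (the derivation $\partial_t$ and the homomorphisms $\phi_{w-z_i}$ only see differences $w-z_i$) shows the residue data at $z_n$ is redundant given the rest, so the listed elements generate $\A(z_1,\dots,z_n)$. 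Freeness: $\A(z_1,\dots,z_n)$ is the image of the free commutative algebra $\A$ under $\phi_{z_1,\dots,z_n}$; by a degeneration/associated-graded argument (as in \cite{FFR}) the number of algebraically independent generators is preserved for pairwise distinct $z_i$, and the count $\sum_l(n-1)\deg\Phi_l+\rk\gg$ matches, giving polynomiality.

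Parts (2)–(4) are then short. For (2): under $z_i\mapsto az_i+b$ the function $S_l(w;z_1,\dots,z_n)$ transforms by the corresponding reparametrization $w\mapsto aw+b$ combined with the $t\partial_t$-homogeneity of $S_l$, and the coefficients of the principal parts are invariant because $S_l^{i,m}$ is a residue-type coefficient attached to the difference $w-z_i$; a clean way to see this is that $\A\subset U(\gg_-)$ is stable under $\partial_t$ and under the grading operator $t\partial_t$, and these generate the relevant affine action on the parameters. For (3): Fact (combined with the $\gg$-invariance already built into $S(\gg)^\gg$) gives that $\A\subset U(\gg_-)$ is invariant under $\gg=\gg[0]$ acting by $\ad$, hence $\phi_{z_1,\dots,z_n}(\A)$ is invariant under $\diag_n(\gg)$. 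For (4): the quadratic Casimir-type elements and more generally $\phi_{z_1,\dots,z_n}$ of the constant-loop part yield the diagonal central elements; concretely one exhibits the center of $\diag_n(U(\gg))$ inside the algebra generated by the $H_i$ of \eqref{quadratic} and $\A(z_1,\dots,z_n)$, or simply notes it arises as the ``$w\to\infty$'' limit combined over all $z_i$. The main obstacle is the bookkeeping in part (1): correctly matching the two generating sets and rigorously justifying that freeness (equivalently, the transcendence-degree count) survives specialization at pairwise distinct points — this is where one must invoke the associated-graded argument of \cite{FFR,Fr2} rather than a naive count; everything else is formal manipulation of the specialization homomorphism.
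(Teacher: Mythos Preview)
Your treatment of (2), (3), and (4) is essentially the paper's: stability under $z_i\mapsto az_i+b$ comes from the action of $\partial_t$ and $t\partial_t$-homogeneity on $S_l$, $\gg$-invariance descends along the equivariant specialization map, and the diagonal center appears as the leading coefficient of $S_l(w;z_1,\dots,z_n)$ at $w=\infty$.

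In (1) there are two inaccuracies. First, $S_l(w;z_1,\dots,z_n)$ has a \emph{zero} of order $\deg\Phi_l$ at $\infty$, so there is no ``regular part at $\infty$'' equal to $\Phi_l^{(n)}$; the identity $S_l^{n,\deg\Phi_l}=\Phi_l^{(n)}$ is simply the computation of the top-order pole at $w=z_n$, where only the $n$-th tensor factor contributes. Second, the redundancy of $S_l^{n,m}$ for $m<\deg\Phi_l$ is not ``translation invariance'' but precisely the $\deg\Phi_l-1$ linear relations imposed on the partial-fraction coefficients by the vanishing of the $w^{-1},\dots,w^{-(\deg\Phi_l-1)}$ terms at $\infty$; this is how the paper argues and it is what makes the count $(n-1)\sum_l\deg\Phi_l+\rk\gg$ come out right.

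The substantive gap is freeness. You gesture at an ``associated-graded argument as in \cite{FFR}'' preserving algebraic independence under specialization, but this is not what the paper does, and it is not clear the references you cite actually supply the needed transcendence-degree bound for the classical Gaudin algebra in $S(\gg)^{\otimes n}$. The paper's device is different and rather specific: take the associated graded of $\A(z_1,\dots,z_n)$ with respect to the filtration on the $n$-th tensor factor \emph{only}, obtaining a commutative subalgebra of $U(\gg)^{\otimes(n-1)}\otimes S(\gg)$; then evaluate the $S(\gg)$-factor at a generic $\mu\in\gg^*$. By Theorems~2 and~3 of \cite{Ryb1}, the image is the tensor product $\A_\mu^{(1)}\otimes\dots\otimes\A_\mu^{(n-1)}$ of quantum Mishchenko--Fomenko subalgebras, whose transcendence degree $\frac{n-1}{2}(\dim\gg+\rk\gg)$ is known from \cite{MF}; adding back the $\rk\gg$ central generators $\Phi_l^{(n)}$ killed by the evaluation gives exactly the required count. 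Your sketch does not anticipate this reduction, and without it (or an explicit alternative computation of the transcendence degree) the freeness claim remains unsupported.
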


\begin{proof}
(1) The algebra $\A(z_1,\dots,z_n)$ is generated by the elements
$\phi_{z_1,\dots,z_n}(\partial_t^n S_l)$. The latter are Taylor
coefficients of
$S_l(w;z_1,\dots,z_n)=\phi_{w-z_1,\dots,w-z_n}(S_l)$ about $w=0$.
The function $S_l(w;z_1,\dots,z_n)$ is meromorphic in $w$ having a
zero of order $\deg\Phi_l$ at $\infty$. Since the poles of
$S_l(w)$ are exactly $z_1,\dots,z_n$, the Taylor coefficients of
$S_l(w)$ about $w=0$ are linear expressions in the coefficients of
the principal part of the Laurent series for the same function
about $z_1,\dots,z_n$. Since the function $S_l(w)$ has zero of
order $\deg\Phi_l$ at $\infty$, for each $m=1,\dots,\deg\Phi_l-1$
the Laurent coefficient $S_l^{n,m}(z_1,\dots,z_n)=\Res_{w=z_n}
(w-z_n)^{m-1}S_l(w;z_1,\dots,z_n)$ is a linear combination of the
Laurent coefficients at $z_1,\dots,z_{n-1}$.

Now, it remains to check that the generators $S_l^{i,m}$ are
algebraically independent. Equivalently, we need to prove that the
transcendence degree of $\A(z_1,\dots,z_n)$ is
$\frac{n-1}{2}(\dim\gg+\rk\gg)+\rk\gg$. We shall deduce this from
the maximality of quantum Mishchenko-Fomenko subalgebras. Namely,
it is sufficient to prove that the associated graded quotient of
$\A(z_1,\dots,z_n)$ with respect to the $n$-th tensor factor has
the transcendence degree $\frac{n-1}{2}(\dim\gg+\rk\gg)+\rk\gg$.
This associated graded quotient is a commutative subalgebra
$\overline{\A(z_1,\dots,z_n)}\subset U(\gg)^{\otimes(n-1)}\otimes
S(\gg)$. Any element $\mu\in\gg^*=\Spec S(\gg)$ gives the
evaluation homomorphism
$\id^{\otimes(n-1)}\otimes\mu:U(\gg)^{\otimes(n-1)}\otimes
S(\gg)$. This homomorphism sends the central generators
$\overline{S_l^{n,\deg\Phi_l}(z_1,\dots,z_n)}=\Phi_l^{(n)}\in
\overline{\A(z_1,\dots,z_n)}\subset U(\gg)^{\otimes(n-1)}\otimes
S(\gg)$ to constants. By Theorems~2~and~3 of \cite{Ryb1}, the
algebra
$(\id^{\otimes(n-1)}\otimes\mu)(\overline{\A(z_1,\dots,z_n)})\subset
U(\gg)^{\otimes(n-1)}$ is the tensor product of quantum
Mishchenko-Fomenko subalgebras,
$\A_{\mu}^{(1)}\otimes\dots\otimes\A_{\mu}^{(n-1)}$ which is known
to be of the transcendence degree $\frac{n-1}{2}(\dim\gg+\rk\gg)$
(see \cite{MF}). Thus, the algebra $\overline{\A(z_1,\dots,z_n)}$
has the transcendence degree $\frac{n-1}{2}(\dim\gg+\rk\gg)$ over
$\cc[\Phi_1^{(n)},\dots,\Phi_{\rk\gg}^{(n)}]=S(\gg)^\gg$. Hence
the assertion.

(2) The Laurent coefficients of the functions
$S_l(w;z_1+b,\dots,z_n+b)$ about $z_i+b$ are equal to those of
$S_l(w-b;z_1,\dots,z_n)$ about $z_i$, hence our generators are
stable under simultaneous transformations of the parameters
$z_i\mapsto z_i+b$.

Now consider simultaneous transformations $z_i\mapsto az_i$. The
Laurent coefficients of the functions $S_l(w;az_1,\dots,az_n)$
about $az_i$ are proportional to those of
$S_l(aw;az_1,\dots,az_n)$ about $z_i$. Since the elements $S_l\in
U(\gg_-)$ are homogeneous with respect to $t\partial_t$, we see
that $\exp((\log a)t\partial_t)S_l$ is proportional to $S_l$. Note
that $S_l(aw;az_1,\dots,az_n)=\phi_{w-z_1,\dots,w-z_n}(\exp((\log
a)t\partial_t)S_l)$. This means that the Laurent coefficients of
the functions $S_l(aw;az_1,\dots,az_n)$ about $z_i$ are
proportional to those of $S_l(w;z_1,\dots,z_n)$ about $z_i$.

(3) The elements $S_l\in U(\gg_-)$ are $\gg$-invariant, and the
homomorphisms $\phi_{w-z_1,\dots,w-z_n}$ are $\gg$-equivariant for
any $w$, hence the images of $S_l$ under the homomorphisms
$\phi_{w-z_1,\dots,w-z_n}$ are $\gg$-invariant as well.

(4) The $\deg\Phi_l$-th Taylor coefficient of
$S_l(w;z_1,\dots,z_n)$ at $\infty$ is the $l$-th generator of the
center of $\diag_n(U(\gg))\subset U(\gg)^{\otimes n}$.
\end{proof}

\section{Limits of Gaudin algebras}\label{sect_limit}

The algebra $U(\gg)^{\otimes n}$ has an increasing filtration by
finite-dimensional spaces, $U(\gg)^{\otimes
n}=\bigcup\limits_{k=0}^{\infty} (U(\gg)^{\otimes n})_{(k)}$ (by
degree with respect to the generators). We define the limit
$\lim\limits_{s\to\infty}B(s)$ for any one-parameter family of
subalgebras $B(s)\subset U(\gg)^{\otimes n}$ as
$\bigcup\limits_{k=0}^{\infty} \lim\limits_{s\to\infty}B(s)\cap
(U(\gg)^{\otimes n})_{(k)}$. It is clear that the limit of a
family of \emph{commutative} subalgebras is a commutative
subalgebra. It is also clear that passage to the limit commutes
with homomorphisms of filtered algebras (in particular, with the
projection onto any factor and with finite-dimensional
representations).

We shall consider the limits of Gaudin subalgebras when some of
the points $z_1,\dots,z_n$ glue together. More precisely, let
$z_1,\dots,z_k$ be independent on $s$, and $z_{k+i}=z+su_i,\
i=1,\dots,n-k$, where $z_1,\dots,z_k,z\in\cc$ are pairwise
distinct and $u_1,\dots,u_{n-k}\in\cc$ are pairwise distinct. Let
us describe the limit subalgebra
$\lim\limits_{s\to0}\A(z_1,\dots,z_k,z+su_1,\dots,z+su_{n-k})\subset
U(\gg)^{\otimes n}$.

Consider the following homomorphisms $$D_{k,n}:=\id^{\otimes
k}\otimes\diag_{n-k}:U(\gg)^{\otimes (k+1)}\hookrightarrow
U(\gg)^{\otimes n},$$ and
$$I_{k,n}:=1^{\otimes k}\otimes\id^{\otimes(n-k)}:
U(\gg)^{\otimes (n-k)}\hookrightarrow U(\gg)^{\otimes n},
$$
where $\id:U(\gg)\to U(\gg)$ is the 
identity map, $\diag_{n-k}:U(\gg)\hookrightarrow U(\gg)^{\otimes
(n-k)}$ is the diagonal embedding.
Clearly,
the image of $[U(\gg)^{\otimes (n-k)}]^\gg$
under the homomorphism
$I_{k,n}$ commutes with the image of the homomorphism $D_{k,n}$.

Let $z_1,\dots,z_k,z$ and $u_1,\dots,u_{n-k}$ be two collections
of pairwise distinct complex numbers. We assign to these data a
commutative subalgebra
$$
\A_{(z_1,\dots,z_k,z),(u_1,\dots,u_{n-k})}:=
D_{k,n}(\A(z_1,\dots,z_k,z))\cdot I_{k,n}(\A(u_1,\dots,u_{n-k}))
\subset U(\gg)^{\otimes n}.
$$

\begin{Proposition}\label{tr-deg}
The subalgebra $\A_{(z_1,\dots,z_k,z),(u_1,\dots,u_{n-k})}$ is a
free commutative algebra generated by the elements
$D_{k,n}(S_l^{i,m}(z_1,\dots,z_k,z))$, with $i=1,\dots,k$,
$l=1,\dots,\rk\gg$, $m=1,\dots,\deg\Phi_l$,
$I_{k,n}(S_l^{i,m}(u_1,\dots,u_{n-k}))$, with $i=1,\dots,n-k-1$,
$l=1,\dots,\rk\gg$, $m=1,\dots,\deg\Phi_l$ and
$S_l^{n,\deg\Phi_l}(z_1,\dots,z_n)=\Phi_l^{(n)}\in U(\gg)^{\otimes
n}$, where $l=1,\dots,\rk\gg$.
\end{Proposition}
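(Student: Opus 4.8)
The plan is to prove this by combining two facts: first, that $\A_{(z_1,\dots,z_k,z),(u_1,\dots,u_{n-k})}$ really is the limit $\lim_{s\to 0}\A(z_1,\dots,z_k,z+su_1,\dots,z+su_{n-k})$, and second, a transcendence-degree count that forces the listed generators to be algebraically independent. For the first point, one examines the generators $S_l^{i,m}(z_1,\dots,z_k,z+su_1,\dots,z+su_{n-k})$ of Proposition~\ref{generators2} as $s\to 0$. The generators attached to the unscaled points $z_1,\dots,z_k$ converge, after rescaling nothing, to elements that factor through $D_{k,n}$ applied to generators of $\A(z_1,\dots,z_k,z)$ (since in the limit the cluster of points $z+su_i$ all collapse to $z$, and the diagonal embedding $\diag_{n-k}$ records exactly this collapse). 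For the generators attached to the scaled points $z+su_i$, one uses part~(2) of Proposition~\ref{generators2}: these are stable under $z_i\mapsto az_i+b$, so one may rescale by $s^{-1}$ and shift by $-z/s$, turning the cluster $\{z+su_i\}$ into $\{u_i\}$ while the unscaled points $z_1,\dots,z_k$ run off to infinity. Taking $s\to 0$, the contributions of $z_1,\dots,z_k$ drop out and one is left precisely with $I_{k,n}(S_l^{i,m}(u_1,\dots,u_{n-k}))$, together with the ``leftover'' central elements $\Phi_l^{(n)}$.

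Granting this, the algebra $\lim_{s\to 0}\A(\dots)$ is generated by the displayed elements; since it is a limit of commutative algebras it is commutative, and since limits commute with projections it sits inside $U(\gg)^{\otimes n}$ with the stated factorization. What remains is to show the generators are algebraically independent, i.e. that the transcendence degree equals $k\cdot\frac12(\dim\gg+\rk\gg) + (n-k-1)\cdot\frac12(\dim\gg+\rk\gg) + \rk\gg = \frac{n-1}{2}(\dim\gg+\rk\gg)+\rk\gg$. One way is to argue directly that $\A_{(z_1,\dots,z_k,z),(u_1,\dots,u_{n-k})} = D_{k,n}(\A(z_1,\dots,z_k,z))\cdot I_{k,n}(\A(u_1,\dots,u_{n-k}))$ and that the two factors are ``almost independent'': $\A(z_1,\dots,z_k,z)$ has transcendence degree $\frac{k}{2}(\dim\gg+\rk\gg)+\rk\gg$ by Proposition~\ref{generators2}(1) (with $k+1$ points, $k$ of them ``free''), $\A(u_1,\dots,u_{n-k})$ has transcendence degree $\frac{n-k-1}{2}(\dim\gg+\rk\gg)+\rk\gg$, and the images under $D_{k,n}$ and $I_{k,n}$ share exactly the $\rk\gg$ central elements $\Phi_l^{(n)}$ — since $I_{k,n}$ lands in the last $n-k$ factors acted on diagonally and $\A(u_1,\dots,u_{n-k})$ is $\gg$-invariant, its image commutes with and meets $D_{k,n}(\A(z_1,\dots,z_k,z))$ only in $\cc[\Phi_1^{(n)},\dots,\Phi_{\rk\gg}^{(n)}]$. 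Adding these gives exactly $\frac{n-1}{2}(\dim\gg+\rk\gg)+\rk\gg$, which matches the number of listed generators, so they are free.

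Alternatively — and this is cleaner — one mimics verbatim the argument in the proof of Proposition~\ref{generators2}(1): take the associated graded quotient with respect to the $n$-th tensor factor and then evaluate the resulting $S(\gg)$ at a generic $\mu\in\gg^*$. Under $\id^{\otimes(n-1)}\otimes\mu$, the algebra $\overline{\A(z_1,\dots,z_k,z)}$ in factors $1,\dots,k,n$ (with the $n$-th replaced by $S(\gg)$, then evaluated at $\mu$) becomes, by Theorems~2 and~3 of \cite{Ryb1}, a tensor product of $k$ quantum Mishchenko--Fomenko algebras of total transcendence degree $\frac{k}{2}(\dim\gg+\rk\gg)$; the factor $I_{k,n}(\A(u_1,\dots,u_{n-k}))$ is unaffected by this procedure in its first $n-k-1$ slots and contributes $\frac{n-k-1}{2}(\dim\gg+\rk\gg)$, and the $n$-th-slot generators $\Phi_l^{(n)}$ go to constants. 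Since the first group lives in tensor factors $1,\dots,k$ and the second in factors $k+1,\dots,n-1$, these are genuinely independent, giving total transcendence degree $\frac{n-1}{2}(\dim\gg+\rk\gg)$ over $S(\gg)^\gg$, hence $\frac{n-1}{2}(\dim\gg+\rk\gg)+\rk\gg$ for $\A_{(z_1,\dots,z_k,z),(u_1,\dots,u_{n-k})}$ itself.

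I expect the main obstacle to be the bookkeeping in the limit argument — keeping careful track of which generators survive which of the two rescalings ($s\to 0$ directly versus after conjugating by the affine transformation $z_i\mapsto s^{-1}z_i - s^{-1}z$), and checking that the principal parts of the Laurent series behave continuously and that no generators are lost or identified in the limit. The transcendence-degree count itself is then a routine consequence of Proposition~\ref{generators2} and \cite{Ryb1}, and the commutativity is automatic since the limit of commutative subalgebras is commutative and $I_{k,n}([U(\gg)^{\otimes(n-k)}]^\gg)$ commutes with the image of $D_{k,n}$ as noted in the text.
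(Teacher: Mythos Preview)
Your approach inverts the paper's logical order. In the paper, Proposition~\ref{tr-deg} is proved \emph{directly} and is then \emph{used} to prove the limit statement (Theorem~1): the paper shows that the limits of the generators land in $\A_{(z_1,\dots,z_k,z),(u_1,\dots,u_{n-k})}$, and then appeals to Propositions~\ref{generators2} and~\ref{tr-deg} to conclude that the Poincar\'e series match, forcing equality. So your plan to first establish the limit and then deduce Proposition~\ref{tr-deg} from it is, relative to the paper, backwards; if you were to rely on Theorem~1 as stated, the argument would be circular.

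More concretely, the statement concerns the subalgebra \emph{defined} as the product $D_{k,n}(\A(z_1,\dots,z_k,z))\cdot I_{k,n}(\A(u_1,\dots,u_{n-k}))$, and the generation claim has nothing to do with limits. The paper's argument is: by Proposition~\ref{generators2}(1), each factor is generated by its own $S_l^{i,m}$; the only generators of $D_{k,n}(\A(z_1,\dots,z_k,z))$ not already among your list are the $D_{k,n}(S_l^{k+1,\deg\Phi_l})$, which by Proposition~\ref{generators2}(4) are central in $D_{k,n}(U(\gg)^{\otimes(k+1)})$ and hence lie in $I_{k,n}(\A(u_1,\dots,u_{n-k}))$. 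That is the entire generation argument. Your limit computation shows that the listed elements lie in $\lim_{s\to0}\A(\dots)$, but that is not the algebra in the statement, and you never close the gap between the two without essentially redoing this direct argument.

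Your ``alternatively'' paragraph for algebraic independence is correct and is exactly what the paper means by ``the same argument as (1) of Proposition~\ref{generators2}'': pass to the associated graded in the $n$-th slot, specialize at generic $\mu$, and reduce to tensor products of Mishchenko--Fomenko subalgebras sitting in disjoint tensor factors. So the independence half of your proposal is fine; it is the generation half, routed through the limit, that is an unnecessary detour.
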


\begin{proof} Note that by the assertion (4) of
Proposition~\ref{generators2} the center of $\id^{\otimes
k}\otimes\diag_{n-k}(U(\gg))$ is contained in
$I_{k,n}(\A(u_1,\dots,u_{n-k})$. Hence, by (1) of
Proposition~\ref{generators2}, the elements defined above generate
the algebra $\A_{(z_1,\dots,z_k,z),(u_1,\dots,u_{n-k})}$. Thus it
remains to show that these elements are algebraically independent.
But this is so due to the same argument as (1) of
Proposition~\ref{generators2}.
\end{proof}

\begin{Theorem}
$\lim\limits_{s\to0}\A(z_1,\dots,z_k,z+su_1,\dots,z+su_{n-k})=\A_{(z_1,\dots,z_k,z),(u_1,\dots,u_{n-k})}\subset
U(\gg)^{\otimes n}$.
\end{Theorem}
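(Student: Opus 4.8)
The plan is to prove the two inclusions separately, using the explicit generators of both sides supplied by Proposition~\ref{generators2} and Proposition~\ref{tr-deg}, together with the fact (recorded at the start of Section~\ref{sect_limit}) that passing to the limit commutes with filtered homomorphisms and preserves commutativity.

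First I would show that $\A_{(z_1,\dots,z_k,z),(u_1,\dots,u_{n-k})}\subset\lim\limits_{s\to0}\A(z_1,\dots,z_k,z+su_1,\dots,z+su_{n-k})$. For this it suffices to realise each of the generators listed in Proposition~\ref{tr-deg} as a limit, as $s\to0$, of a suitably normalised element of $\A(z_1,\dots,z_k,z+su_1,\dots,z+su_{n-k})$. The key computation is with the meromorphic functions $S_l(w;z_1,\dots,z_k,z+su_1,\dots,z+su_{n-k})$. As $s\to0$ the $n-k$ poles at $z+su_i$ collide at $w=z$; rescaling the local coordinate near $z$ by $w=z+sv$ and using the homogeneity of $S_l$ under $t\partial_t$ (exactly as in the proof of part~(2) of Proposition~\ref{generators2}), one sees that the principal part of $S_l$ in the variable $v$ converges, after the appropriate power of $s$, to $\phi_{v-u_1,\dots,v-u_{n-k}}(S_l)$ pushed into the last $n-k$ tensor factors, i.e. to the functions defining the generators $I_{k,n}(S_l^{i,m}(u_1,\dots,u_{n-k}))$; meanwhile the Laurent coefficients at the fixed points $z_1,\dots,z_k$ and the ``merged'' coefficient at $w=z$ converge to $D_{k,n}(S_l^{i,m}(z_1,\dots,z_k,z))$, since $\phi_{w-z_1,\dots,w-z_k,w-z-su_i}$ tends to $\phi_{w-z_1,\dots,w-z_k,w-z}$ composed with $D_{k,n}$ coefficientwise in $U(\gg)^{\otimes n}$. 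This gives all the generators of the right-hand side in the limit algebra.

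For the reverse inclusion I would use a dimension count. By the general remarks the limit $\lim_{s\to0}\A(\dots)$ is a commutative subalgebra containing $\A_{(z_1,\dots,z_k,z),(u_1,\dots,u_{n-k})}$. On the other hand, the limit of a family of subalgebras can only get bigger in each filtered piece in the sense of dimension: for each $k$, $\dim\bigl(\lim_{s\to0}\A(s)\cap(U(\gg)^{\otimes n})_{(k)}\bigr)=\dim\bigl(\A(s)\cap(U(\gg)^{\otimes n})_{(k)}\bigr)$ for generic $s$, because the dimension of the intersection of a flat family of subspaces with a fixed finite-dimensional space is constant on a dense open set and can only jump up at special points; but here it is constant since the Hilbert series of a Gaudin algebra $\A(z_1,\dots,z_n)$ does not depend on the (pairwise distinct) parameters — it is a free polynomial algebra on the generators of Proposition~\ref{generators2} with fixed degrees. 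Comparing this with the Hilbert series of $\A_{(z_1,\dots,z_k,z),(u_1,\dots,u_{n-k})}$, which by Proposition~\ref{tr-deg} is the free polynomial algebra on generators with exactly the same multiset of degrees (the $D_{k,n}(S_l^{i,m}(z_1,\dots,z_k,z))$ for $i\le k$, the $I_{k,n}(S_l^{i,m}(u_1,\dots,u_{n-k}))$ for $i\le n-k-1$, and the $\Phi_l^{(n)}$ — the same count $\frac{n-1}{2}(\dim\gg+\rk\gg)+\rk\gg$ of generators, degree by degree), we conclude that the two algebras have the same Hilbert series, and since one contains the other they coincide.

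The main obstacle is the first inclusion, specifically the bookkeeping in the rescaling argument: one must check that after the correct power of $s$ the principal parts converge to nonzero limits that are precisely the claimed generators, and in particular that the $\gg$-invariant functions of the rescaled variable produce the $I_{k,n}$-image of $\A(u_1,\dots,u_{n-k})$ rather than something smaller. This is where the homogeneity of the $S_l$ under $t\partial_t$ and the affine-covariance statement (part~(2) of Proposition~\ref{generators2}) do the real work, turning the collision of points into a genuine rescaling of the spectral parameter. The Hilbert-series comparison in the second step is then essentially formal, given that both sides are free polynomial algebras whose generator degrees have been pinned down.
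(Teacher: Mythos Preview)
Your proposal is correct and follows essentially the same route as the paper: compute the limits of the Laurent-coefficient generators $S_l^{i,m}$ to get the inclusion $\A_{(z_1,\dots,z_k,z),(u_1,\dots,u_{n-k})}\subset\lim_{s\to0}\A(\dots)$, then conclude equality by matching Poincar\'e series via Propositions~\ref{generators2} and~\ref{tr-deg}. The paper packages your rescaling computation into two short lemmas --- one showing $\lim_{z\to\infty}\phi_z=\ep$ (the co-unit), which is exactly what makes the first $k$ tensor factors trivial after your substitution $w=z+sv$ and forces the limit into the $I_{k,n}$-image, and one showing $\lim_{s\to0}\phi_{z_1,\dots,z_k,z+su_1,\dots,z+su_{n-k}}=D_{k,n}\circ\phi_{z_1,\dots,z_k,z}$ --- but these are precisely the computations you outline.
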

\begin{proof}
\begin{Lemma}\label{predel}
$\lim\limits_{z\to\infty}\phi_z=\ep$, where
$\ep:U(\gg_-)\to\cc\cdot1\subset~U(\gg)$ is the co-unit.
\end{Lemma}
\begin{proof}
It is sufficient to check this on the generators. We have
$$
\lim\limits_{z\to\infty}\phi_z(g[m])=\lim\limits_{z\to\infty}z^mg=0\quad\forall\
g\in\gg,\ m=-1,-2,\dots.
$$
\end{proof}

Let us choose the generators of
$\A(z_1,\dots,z_k,z+su_1,\dots,z+su_{n-k})$ as in
Proposition~\ref{generators2}. The coefficients of the Laurent
expansion of $S_l(w;z_1,\dots,z_k,z+su_1,\dots,z+su_{n-k})$ at
$z+su_j$ are proportional to the Laurent coefficients of
$S_l(w;\frac{z_1-z}{s},\dots,\frac{z_k-z}{s},u_1,\dots,u_{n-k})$
at the point $u_j$. On the other hand, by Lemma~\ref{predel} we
have
\begin{multline*}
\lim\limits_{s\to0}S_l(w;\frac{z_1-z}{s},\dots,\frac{z_k-z}{s},u_1,\dots,u_{n-k})=\\=
\lim\limits_{s\to0}\phi_{w-\frac{z_1-z}{s},\dots,w-\frac{z_k-z}{s},w-u_1\dots,w-u_{n-k}}(S_l)=\\
=(\ep\otimes\dots\otimes\ep\otimes\phi_{w-u_1}\otimes\dots\otimes\phi_{w-u_{n-k}})\circ
\diag_n(S_l)=I_{n,k}S_l(w;u_1,\dots,u_{n-k}).
\end{multline*}

Therefore,
$\lim\limits_{s\to0}S_l^{i+k,m}(z_1,\dots,z_k,z+su_1,\dots,z+su_{n-k})=I_{n,k}(S_l^{i,m}(u_1,\dots,u_{n-k}))$
for $i=1,\dots,n-k$.

Now let us compute the limits of the coefficients of the Laurent
expansion of $S_l(w;z_1,\dots,z_k,z+su_1,\dots,z+su_{n-k})$ at
$z_i$.

\begin{Lemma}\label{predel2}
$\lim\limits_{s\to0}\phi_{z_1,\dots,z_k,z+su_1,\dots,z+su_{n-k}}=D_{n,k}\circ\phi_{z_1,\dots,z_k,z}$.
\end{Lemma}
\begin{proof}
It is sufficient to check this on the generators. We have
\begin{multline*}
\lim\limits_{s\to0}\phi_{z_1,\dots,z_k,z+su_1,\dots,z+su_{n-k}}(g[m])=\\=
\lim\limits_{s\to0}(\sum\limits_{i=1}^kz_i^mg^{(i)}+\sum\limits_{i=k+1}^n(z+su_{i-k})^mg^{(i)})=\\=
D_{n,k}\circ\phi_{z_1,\dots,z_k,z}(g[m]).
\end{multline*}
\end{proof}

By Lemma~\ref{predel2} we have
$\lim\limits_{s\to0}S_l^{i,m}(z_1,\dots,z_k,z+su_1,\dots,z+su_{n-k})=D_{n,k}(S_l^{i,m}(z_1,\dots,z_k,z))$
for $i=1,\dots,k$.

Thus we have
$\lim\limits_{s\to0}\A(z_1,\dots,z_k,z+su_1,\dots,z+su_{n-k})\supset\A_{(z_1,\dots,z_k,z),(u_1,\dots,u_{n-k})}$.
On the other hand, from
propositions~\ref{generators2}~and~\ref{tr-deg}, it follows that
the algebras $\A(z_1,\dots,z_k,z+su_1,\dots,z+su_{n-k})$ and of
$\A_{(z_1,\dots,z_k,z),(u_1,\dots,u_{n-k})}$ have the same
Poincare series. Hence
$\lim\limits_{s\to0}\A(z_1,\dots,z_k,z+su_1,\dots,z+su_{n-k})=\A_{(z_1,\dots,z_k,z),(u_1,\dots,u_{n-k})}$
\end{proof}

\section{Quantum hamiltonians of bending flows}\label{sect_bendflows}

In \cite{FM03-1}, a complete system of Poisson-commuting elements in
the Poisson algebra $S(gl_N)^{\otimes n}$ was constructed. Namely,
the following functions on $gl_n\oplus\dots\oplus gl_N$ commute
with respect to the Poisson bracket.
$$\overline H_{l,k}^{(\alpha)}(X_1,\dots,X_n):=\Res_{s=0}\frac{1}{z^{\alpha+1}}\Tr(X_k+z(\sum\limits_{i=k+1}^nX_i))^l,$$
where $X_1,\dots,X_n\in gl_N$. Now, we construct a system of
commuting elements $H_{l,k}^{(\alpha)}\in U(gl_N)^{\otimes n}$
such that $\gr H_{l,k}^{(\alpha)}=\overline H_{l,k}^{(\alpha)}$.

One can iterate the limiting procedure described in the previous
section to obtain some new commutative subalgebras in
$U(\gg)^{\otimes n}$. In particular, we can obtain the following
subalgebra $\A_{(z_1,z_2),\dots,(z_1,z_2)}\subset U(\gg)^{\otimes
n}$, which is generated by
$$D_{n,2}(\A_{z_1,z_2}),\ 1\otimes D_{n-1,2}(\A_{z_1,z_2}),\
\dots,\ 1^{\otimes(n-2)}\otimes \A_{z_1,z_2}.$$ By
Proposition~\ref{generators2} assertion (2), this subalgebra is
independent on $z_1,z_2$. We denote it by $\A_{\lim}$. The
following is checked by an easy computation.

\begin{Proposition} For $\Phi_l(X)=\Tr X^l\ (X\in gl_N)$, $S_l(z)=i_z(\Phi_l)$,
we have $\gr D_{n-k+1,2}(S_l^{1,\alpha})=\overline
H_{l,k}^{(\alpha)}$.
\end{Proposition}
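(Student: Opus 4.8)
The plan is to reduce the statement to an explicit residue computation, tracking how the iterated limiting construction of $\A_{\lim}$ acts on the specific generators $S_l^{1,\alpha}$ coming from $\Phi_l(X)=\Tr X^l$. First I would recall the meaning of the object $S_l^{1,\alpha}(z_1,z_2)\in U(\gg)\otimes U(\gg)$: by the definitions in Section~\ref{subsect_generators}, it is a coefficient of the principal part of the Laurent expansion at $w=z_1$ of $S_l(w;z_1,z_2)=\phi_{w-z_1,w-z_2}(S_l)$, where $S_l=i_{-1+\cdots}(\Phi_l)$ is the homogeneous lift of $\overline{S_l}=i_{-1}(\Phi_l)$. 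Passing to associated graded with respect to \emph{both} tensor factors replaces $U(\gg)^{\otimes 2}$ by $S(\gg)^{\otimes 2}=\cc[\gg^*\oplus\gg^*]$ and replaces $S_l$ by its symbol $\overline{S_l}$, so $\gr S_l^{1,\alpha}(z_1,z_2)$ becomes a coefficient in the Laurent expansion at $w=z_1$ of $\Phi_l\bigl(\tfrac{X_1}{w-z_1}+\tfrac{X_2}{w-z_2}\bigr)=\Tr\bigl(\tfrac{X_1}{w-z_1}+\tfrac{X_2}{w-z_2}\bigr)^l$, where $X_1,X_2$ are the coordinates on the two copies of $\gg^*$. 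Writing $w-z_1=\zeta$ and expanding $\tfrac{1}{w-z_2}=\tfrac{1}{\zeta+(z_1-z_2)}$ as a power series in $\zeta$, one identifies the coefficient of $\zeta^{-\alpha-1}$; since the construction is stable under affine rescaling of $z_1,z_2$ (Proposition~\ref{generators2}(2)), one may as well normalize $z_1-z_2$, and up to a nonzero scalar this coefficient is $\Res_{z=0}\tfrac{1}{z^{\alpha+1}}\Tr(X_1+zX_2)^l$ — which is exactly $\overline H_{l,k}^{(\alpha)}$ in the two-variable case $X_k=X_1$, $\sum_{i=k+1}^n X_i=X_2$.

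Next I would handle the map $D_{n-k+1,2}$. Recall $D_{k,n}=\id^{\otimes k}\otimes\diag_{n-k}$; applied in the guise $D_{n-k+1,2}:U(\gg)^{\otimes 2}\hookrightarrow U(\gg)^{\otimes(n-k+1)}$ (then padded by $1^{\otimes(k-1)}$ on the left to land in $U(\gg)^{\otimes n}$) it keeps the first tensor factor and diagonally embeds the second into the last $n-k$ factors. Because taking associated graded commutes with algebra homomorphisms of filtered algebras (as noted in Section~\ref{sect_limit}), $\gr D_{n-k+1,2}(S_l^{1,\alpha})$ is obtained from $\gr S_l^{1,\alpha}(z_1,z_2)$ by the corresponding \emph{commutative} substitution on $S(\gg)^{\otimes 2}\to S(\gg)^{\otimes(n-k+1)}$, namely $X_1\mapsto X_k$ (the $k$-th slot) and $X_2\mapsto X_{k+1}+\cdots+X_n$ (the symbol of the diagonal embedding of $\gg$ into the last $n-k$ copies is precisely the sum of the coordinate functions). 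Substituting this into the expression obtained in the previous paragraph turns $\Res_{z=0}\tfrac{1}{z^{\alpha+1}}\Tr(X_1+zX_2)^l$ into $\Res_{z=0}\tfrac{1}{z^{\alpha+1}}\Tr\bigl(X_k+z\sum_{i=k+1}^nX_i\bigr)^l=\overline H_{l,k}^{(\alpha)}(X_1,\dots,X_n)$, which is the claimed identity.

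So the argument is really two routine bookkeeping steps — (i) compute the symbol of the Gaudin generator $S_l^{1,\alpha}$ for $\gg=gl_N$ and $\Phi_l=\Tr X^l$, and (ii) push it through the symbol of the diagonal embedding — glued together by the general principle that $\gr$ is functorial for filtered homomorphisms. The one point that deserves genuine care, and which I expect to be the main (mild) obstacle, is matching the normalizations: one must check that the Laurent coefficient $S_l^{1,\alpha}$ extracted at $w=z_1$ really corresponds, after the affine-rescaling normalization of $(z_1,z_2)$, to the residue $\Res_{z=0}z^{-\alpha-1}$ appearing in $\overline H_{l,k}^{(\alpha)}$ rather than to some reindexed or rescaled variant (the shift $w-z_1=\zeta$ versus the auxiliary variable $z$ in the definition of $\overline H$, and the order of the pole $\deg\Phi_l=l$ at infinity). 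Once the index $\alpha$ and the expansion variable are lined up — using that $\gr S_l=\overline{S_l}=i_{-1}(\Phi_l)$ kills the higher-order corrections in the lift $S_l$, so only the ``naive'' residue survives in the symbol — the equality $\gr D_{n-k+1,2}(S_l^{1,\alpha})=\overline H_{l,k}^{(\alpha)}$ follows by the easy computation the Proposition refers to, and the iterated construction of $\A_{\lim}$ guarantees that all these symbols for varying $l,k,\alpha$ lie in a single commutative subalgebra.
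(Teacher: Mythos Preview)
Your approach is correct and is exactly what the paper has in mind: the paper's own ``proof'' is the single sentence ``The following is checked by an easy computation'', and you have spelled out that computation --- compute the symbol of $S_l^{1,\alpha}(z_1,z_2)$ as a Laurent coefficient of $\Tr\!\bigl(\tfrac{X_1}{w-z_1}+\tfrac{X_2}{w-z_2}\bigr)^l$, then push it through the symbol of the diagonal embedding $X_2\mapsto X_{k+1}+\cdots+X_n$. Your flagged caveat about normalizations is the only genuine content: with $\zeta=w-z_1$ one gets $\zeta^{-l}\Tr\!\bigl(X_1+\tfrac{\zeta}{\zeta+c}X_2\bigr)^l$, so the Laurent coefficients at $\zeta=0$ are related to the Taylor coefficients $\Res_{z=0}z^{-\alpha-1}\Tr(X_1+zX_2)^l$ by an invertible (triangular) linear change rather than literal equality term-by-term; since only the generated subalgebra matters for the quantization statement that follows, this is harmless.
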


This means that, for $\gg=gl_N$, the algebra $\A_{\lim}$ gives a
quantization of the "bending flows" system from \cite{FM03-1}.

\section{Schur--Weyl duality and Jucys--Murphy elements}\label{sect_jucis}

Let $\gg=gl_N$, and $V_{(\l)}=\cc^N\otimes\dots\otimes\cc^N$. By
Schur--Weyl duality, the centralizer of the diagonal $\gg$ in
$\End(V_{(\l)})$ is the image of the group algebra $\cc[S_n]$.
Equivalently, the space
$V_{(\l)}^{sing}=[\cc^N\otimes\dots\otimes\cc^N]^{sing}$
decomposes into the sum of irreducible $S_n$-modules with
multiplicities $0$ or $1$. Since the elements of
$\A(z_1,\dots,z_n)$ commute with the diagonal action of $\gg$, we
can treat them as commuting elements of $\cc[S_n]$. In particular,
one can rewrite quadratic Gaudin hamiltonians~\ref{quadratic} as
follows:
$$
H_i=\sum\limits_{j\ne i} \frac{(i,j)}{z_i-z_j}.
$$

The quadratic elements of $\A_{\lim}$ can be rewritten as follows
$$
H_i=\sum\limits_{j<i} (i,j).
$$
The latter are known as Jucys--Murphy elements. By \cite{OV},
these elements generate the Gelfand--Tsetlin subalgebra in
$\cc[S_n]$ (in other words, this subalgebra is generated by the
centers of the group subalgebra $\cc[S_{n-1}]\subset\cc[S_n]$,
$\cc[S_{n-2}]\subset\cc[S_{n-1}]\subset\cc[S_n]$ and so on). This
algebra has a simple spectrum in any irreducible representation of
$S_n$. We can obtain from this the following result of Mukhin and
Varchenko.

\begin{Proposition}\cite{MV} Suppose $\gg=gl_N$ and $(\l)=(\l_1,\dots,\l_n)$, where $\l_i=\omega_1$.
The Gaudin algebra $\A(z_1,\dots,z_n)$ (and, moreover, its
quadratic part) has simple joint spectrum in $V_{(\l)}^{sing}$ for
generic values of the parameters $z_1,\dots,z_n$.
\end{Proposition}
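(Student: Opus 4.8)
The plan is to deduce the Proposition from the limit statement (the Theorem just proved) together with the semicontinuity of the spectrum. Concretely, I would argue as follows. Since $\l_i = \omega_1$ for all $i$, we have $V_{(\l)} = (\cc^N)^{\otimes n}$, and by Schur--Weyl duality the image of $\A(z_1,\dots,z_n)$ in $\End(V_{(\l)}^{sing})$ is a commutative subalgebra of $\cc[S_n]$ (acting on the multiplicity spaces). The key observation is that, by the Theorem of the previous section, iterating the gluing limit, one has
$$
\lim_{s\to 0}\A(1,s,s^2,\dots,s^{n-1}) = \A_{\lim}\subset U(gl_N)^{\otimes n},
$$
and passage to the limit commutes with finite-dimensional representations; hence the image of $\A_{\lim}$ in $\cc[S_n]$ is the limit of the images of the $\A(z_1,\dots,z_n)$. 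As noted in the text, the quadratic part of $\A_{\lim}$ acts on $V_{(\l)}^{sing}$ via the Jucys--Murphy elements $H_i = \sum_{j<i}(i,j)$, which by the theorem of Okounkov--Vershik \cite{OV} generate the Gelfand--Tsetlin subalgebra of $\cc[S_n]$; this subalgebra has simple joint spectrum in every irreducible $S_n$-module, and since $V_{(\l)}^{sing}$ is multiplicity-free as an $S_n$-module, $\A_{\lim}$ (already its quadratic part) has simple joint spectrum on all of $V_{(\l)}^{sing}$.

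It remains to transport simplicity of spectrum from the limit point back to generic $z_i$. For this I would use the standard semicontinuity argument: fix a basis-independent way to detect simplicity, e.g.\ via the discriminant of the family of commutative subalgebras acting on the finite-dimensional space $V_{(\l)}^{sing}$. Choose a finite set of generators $T_1(z),\dots,T_r(z)$ of $\A(z_1,\dots,z_n)$ (say the quadratic ones $H_i$) depending algebraically on $z=(z_1,\dots,z_n)$; simplicity of the joint spectrum of these operators on $V_{(\l)}^{sing}$ is equivalent to the non-vanishing of the discriminant $\Delta(z)$ of the characteristic polynomial of a generic linear combination $\sum c_i T_i(z)$, which is a polynomial function of $z$ (and of the auxiliary parameters $c_i$). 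We have just shown $\Delta \not\equiv 0$, because at the limit point $z = (1,s,\dots,s^{n-1})$, $s\to 0$, it is non-zero; hence $\Delta(z)\neq 0$ on a Zariski-dense open set of parameters, i.e.\ for generic $z_1,\dots,z_n$ the operators $H_i$ have simple joint spectrum, and a fortiori so does the full algebra $\A(z_1,\dots,z_n)\supset\{H_i\}$.

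The main obstacle is making the semicontinuity step rigorous despite the limit being a degeneration: one must check that the limit subalgebra actually acts on the \emph{same} space $V_{(\l)}^{sing}$ (true here, since $V_{(\l)} = (\cc^N)^{\otimes n}$ does not depend on the $z_i$ at all, and the space of $\diag_n(\gg)$-singular vectors is likewise fixed), and that "the image of $\A_{\lim}$ has simple spectrum" genuinely implies "the discriminant is not identically zero as a function of $z$". The clean way to phrase this: the function $s\mapsto \A(1,s,\dots,s^{n-1})$ extends, by the Theorem, to a morphism from a neighbourhood of $0$ into the appropriate Grassmannian/Hilbert scheme of commutative subalgebras of $\End(V_{(\l)}^{sing})$, with value $\A_{\lim}$ at $s=0$; simplicity of spectrum is an open condition on this space, so it holds in a punctured neighbourhood of $0$ and therefore for generic $z_i$. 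One should also remark that it suffices to treat each irreducible $S_n$-isotypic component of $V_{(\l)}^{sing}$ separately, which is exactly where multiplicity-freeness of Schur--Weyl for $\l_i=\omega_1$ is used.
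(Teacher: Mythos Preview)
Your argument is correct and follows the same route as the paper: show that the quadratic part of $\A_{\lim}$ acts on $V_{(\l)}^{sing}$ via the Jucys--Murphy elements, invoke Okounkov--Vershik to conclude simple spectrum on each irreducible $S_n$-summand (hence on the multiplicity-free $S_n$-module $V_{(\l)}^{sing}$), and then pass to generic $z_i$ by semicontinuity. The paper's proof is the same three steps stated tersely, with the semicontinuity left implicit; the only minor quibble is that the one-parameter family $(1,s,\dots,s^{n-1})$ is not literally an instance of the single-cluster Theorem of the previous section---in the paper $\A_{\lim}$ is obtained by \emph{iterating} that Theorem---so for the openness argument you should either iterate or note separately that this caterpillar degeneration also lands on $\A_{\lim}$.
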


\begin{proof}
The Gelfand--Tsetlin subalgebra in $\cc[S_n]$ has a simple
spectrum in any irreducible representation of $S_n$. This means
that the algebra $\A_{\lim}$ has a simple spectrum in
$V_{(\l)}^{sing}=[\cc^N\otimes\dots\otimes\cc^N]^{sing}$ (since
the latter is multiplicity-free as an $S_n$-module). Since the
subalgebra $\A_{\lim}$ belongs to the closure of the family of
Gaudin subalgebras $\A(z_1,\dots,z_n)$, for generic values of
$z_i$ the algebra $\A(z_1,\dots,z_n)$ has simple spectrum in
$V_{(\l)}^{sing}$ as well.
\end{proof}

\section{$(gl_N,\ gl_M)$ duality and Gelfand--Tsetlin algebra}\label{sect_duality}

Consider the space $W:=\cc^N\otimes\cc^M$ with the natural action
of the Lie algebra $gl_N\oplus gl_M$. The universal enveloping
algebra $U(gl_N\oplus gl_M)=U(gl_N)\otimes U(gl_M)$ acts on the
symmetric algebra $S(W)=\cc[W^*]$ by differential operators. Let
$D(W)$ be the algebra of differential operators on $W^*$. We shall
use the following classical result.

\begin{Fact}\label{gl-gl-duality}
The image of $U(gl_M)$ is the centralizer of the image of
$U(gl_N)$ in $D(W)$. Equivalently, the space $S(W)^{sing}$ is
multiplicity-free as $gl_M$-module.
\end{Fact}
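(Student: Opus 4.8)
The plan is to deduce the statement from the explicit $(gl_N,gl_M)$--Howe duality decomposition of $S(W)$, which in turn follows from the Cauchy identity together with complete reducibility.

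\emph{Setup.} Choosing bases, identify $W^*$ with the space $\mathrm{Mat}_{N\times M}(\cc)$ of $N\times M$ matrices, with coordinate functions $x_{ia}$ ($1\le i\le N$, $1\le a\le M$); then $S(W)=\cc[x_{ia}]$ and $D(W)$ is the Weyl algebra generated by the $x_{ia}$ and $\partial_{ia}:=\partial/\partial x_{ia}$. Under this identification the matrix units act by $E_{ij}^{(N)}\mapsto\sum_{a=1}^{M}x_{ia}\partial_{ja}$ and $E_{ab}^{(M)}\mapsto\sum_{i=1}^{N}x_{ia}\partial_{ib}$ (for a suitable normalization of the identifications), and a direct computation gives $[E_{ij}^{(N)},E_{ab}^{(M)}]=0$, so the image of $U(gl_M)$ is contained in the centralizer of the image of $U(gl_N)$ in $D(W)$. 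Both actions preserve the degree and are the derivations coming from the polynomial actions of $GL_N$ and $GL_M$ on $W^*$; in particular they exponentiate to representations of $GL_N\times GL_M$ on each finite-dimensional space $S^d(W)$, $d\ge 0$.

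\emph{Decomposition.} Since $GL_N\times GL_M$ is reductive, $S^d(W)=\bigoplus_{\lambda,\mu}c^d_{\lambda\mu}\,V_\lambda^{(N)}\otimes V_\mu^{(M)}$ for some $c^d_{\lambda\mu}\in\zz_{\ge0}$, the sum over pairs of partitions. Evaluating the bigraded character on $\diag(x_1,\dots,x_N)\in GL_N$ and $\diag(y_1,\dots,y_M)\in GL_M$, the character of $S(W)$ is $\prod_{i,a}(1-x_iy_a)^{-1}$, which by the Cauchy identity equals $\sum_\lambda s_\lambda(x_1,\dots,x_N)\,s_\lambda(y_1,\dots,y_M)$, a sum effectively over partitions $\lambda$ with $\ell(\lambda)\le\min(N,M)$. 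Comparing with $\sum_{\lambda,\mu}\big(\sum_d c^d_{\lambda\mu}\big)s_\lambda(x)\,s_\mu(y)$ and using the linear independence of the products $s_\lambda(x)\,s_\mu(y)$, we get $c^d_{\lambda\mu}=\delta_{\lambda\mu}$ when $|\lambda|=d$ and $\ell(\lambda)\le\min(N,M)$, and $0$ otherwise. Hence $S(W)=\bigoplus_\lambda V_\lambda^{(N)}\otimes V_\lambda^{(M)}$, a multiplicity-free decomposition.

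\emph{Conclusion.} The subspace $S(W)^{sing}$ of $gl_N$-singular (highest weight) vectors is $\bigoplus_\lambda \big(\text{highest weight line of }V_\lambda^{(N)}\big)\otimes V_\lambda^{(M)}$, so as a $gl_M$-module it is $\bigoplus_\lambda V_\lambda^{(M)}$ with pairwise distinct $\lambda$, i.e. multiplicity-free. For the centralizer statement, fix $d$ and let $A\subset\End(S^d(W))$ be the image of $U(gl_N)$; as $gl_N$ is reductive and $S^d(W)$ is finite-dimensional, $A$ is semisimple, and by the above it is $\bigoplus_{|\lambda|=d}\End(V_\lambda^{(N)})$ acting diagonally on $\bigoplus_\lambda V_\lambda^{(N)}\otimes V_\lambda^{(M)}$. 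By the double centralizer theorem its centralizer in $\End(S^d(W))$ is $\bigoplus_{|\lambda|=d}\End(V_\lambda^{(M)})$, and since the $V_\lambda^{(M)}$ are pairwise non-isomorphic irreducibles (here multiplicity-freeness is essential) this is exactly the image of $U(gl_M)$ in $\End(S^d(W))$. As the image of $U(gl_M)$ already lies in $D(W)$ and centralizes $A$, while the centralizer of $A$ in $D(W)$ is contained in its centralizer in $\End(S^d(W))$, all three coincide; letting $d$ vary, the centralizer of the image of $U(gl_N)$ in $D(W)$ is precisely the image of $U(gl_M)$.

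\emph{Main obstacle.} The one non-formal ingredient is the Cauchy identity $\prod_{i,a}(1-x_iy_a)^{-1}=\sum_\lambda s_\lambda(x)s_\lambda(y)$ — equivalently, the first fundamental theorem of invariant theory for $GL_N$ — which is the classical core of the statement; granting it, the rest is complete reducibility and the double centralizer theorem. A minor point to watch is that the characters used are genuinely those of the $GL_N\times GL_M$-action, so that the $V_\lambda^{(N)}$ and $V_\mu^{(M)}$ are honest polynomial irreducibles; this is legitimate precisely because both Lie algebra actions exponentiate, as noted in the setup.
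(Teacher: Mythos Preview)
The paper does not prove this Fact; it is stated as a classical result (Howe's $(GL_N,GL_M)$ duality), so there is no proof in the paper to compare yours against. I will just assess your argument.

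Your derivation of the decomposition $S(W)=\bigoplus_\lambda V_\lambda^{(N)}\otimes V_\lambda^{(M)}$ via the Cauchy identity and complete reducibility is correct and standard, and the multiplicity-freeness of $S(W)^{sing}$ as a $gl_M$-module follows immediately, as you say.

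The centralizer half, however, has a real gap. Write $B_N,B_M\subset D(W)$ for the two images. For each $d$ you correctly obtain $C_{\End(S^d(W))}(B_N|_{S^d(W)})=B_M|_{S^d(W)}$ by the double centralizer theorem. From this you can only conclude that for every $T\in C_{D(W)}(B_N)$ and every $d$ there exists \emph{some} $u_d\in U(gl_M)$ with $T|_{S^d(W)}=u_d|_{S^d(W)}$ (after first noting, which you do not, that $T$ preserves each $S^d(W)$ because the Euler operator lies in $B_N$). Nothing in your argument forces the $u_d$ to be independent of $d$, so ``letting $d$ vary'' does not give $T\in B_M$. Equality of images in each factor of $\prod_d\End(S^d(W))$ does not imply equality of the two subalgebras $B_M\subset C_{D(W)}(B_N)$ sitting inside that product. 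The word ``Equivalently'' in the statement is shorthand for the full Howe package, not a formal equivalence you can invoke to close this gap.

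The standard repair is to use the order filtration on $D(W)$ rather than the degree grading on $S(W)$. One has $\gr D(W)\cong S(W\oplus W^*)=\cc[x_{ia},\xi_{ia}]$, on which $GL_N$ acts with $M$ copies of $\cc^N$ (the $x$'s) and $M$ copies of $(\cc^N)^*$ (the $\xi$'s). The First Fundamental Theorem for $GL_N$ gives that $(\gr D(W))^{GL_N}$ is generated by the contractions $\sum_i x_{ia}\xi_{ib}$, which are precisely the principal symbols of $E_{ab}^{(M)}$. Since $GL_N$ is reductive, taking invariants commutes with $\gr$, hence $D(W)^{GL_N}=C_{D(W)}(B_N)$ is generated by the $E_{ab}^{(M)}$, i.e.\ equals $B_M$. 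You already flagged the FFT/Cauchy identity as the non-formal core; the point is that it must be applied to the invariants of $\gr D(W)$, not (only) to the characters of $S(W)$.
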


We can treat the space $W$ as the direct sum of $M$ copies of
$\cc^N$, and hence we have the action of $U(gl_N\oplus\dots\oplus
gl_N)=U(gl_N)^{\otimes M}$ on $S(W)$. The elements of the Gaudin
subalgebra $\A(z_1,\dots,z_M)\subset U(gl_N)^{\otimes M}$ commute
with the diagonal $gl_N$, and hence they can be rewritten as the
elements of $U(gl_M)$. Thus, we obtain a commutative subalgebra in
$U(gl_M)$. Let us describe this subalgebra explicitly. Consider
the diagonal $M\times M$-matrix $Z$ with the diagonal entries
equal to $z_1,\dots,z_M$. To any diagonal matrix $Z$ one can
naturally assign a quantum Mishchenko-Fomenko subalgebra
$\A_Z\subset U(gl_M)$. For generic $Z$, the subalgebra
$\A_Z\subset U(gl_M)$ is the centralizer of the following family
of commuting quadratic elements \cite{Ryb2}:
\begin{equation}\label{quadratic_MF}
Q_Z:=\{\sum\limits_{\alpha\in\Delta_+}\frac{\langle\alpha,h\rangle}{\langle\alpha,Z\rangle}e_{\alpha}e_{-\alpha}|h\in\hh\},
\end{equation}
where $\hh\subset gl_M$ is the subalgebra of diagonal matrices,
$\Delta$ is the root system of $gl_M$, $\Delta_+$ is the set of
positive roots, and $e_{\alpha}$ are certain nonzero elements of
the root spaces $\gg_{\alpha},\ \alpha\in\Delta$. The following
assertion is due to Mukhin, Tarasov and Varchenko.

\begin{Fact}
\cite{MTV2} The image of the Gaudin subalgebra
$\A(z_1,\dots,z_M)\subset U(gl_N)^{\otimes M}$ in $\End S(W)$
coincides with the image of $\A_Z\subset U(gl_M)$. The space of
quadratic Gaudin hamiltonians coincides with the image of
$Q_Z\subset U(gl_M)$.
\end{Fact}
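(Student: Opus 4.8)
The statement is a form of the quantum bispectral $(gl_N,gl_M)$-duality, and the plan is to combine the classical Howe duality of Fact~\ref{gl-gl-duality} with an explicit degree-two computation and a Capelli-type determinantal identity for the higher generators. Fix coordinates $x_{ab}$ ($a=1,\dots,N$, $b=1,\dots,M$) on $W^{*}$, and write $\rho\colon U(gl_N)^{\otimes M}\longrightarrow D(W)$ and $\rho'\colon U(gl_M)\longrightarrow D(W)$ for the two actions, so that $\rho(e_{aa'}^{(b)})=x_{ab}\partial_{x_{a'b}}$ and $\rho'(E_{bc})=\sum_a x_{ab}\partial_{x_{ac}}$. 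By assertion~(3) of Proposition~\ref{generators2} every element of $\A(z_1,\dots,z_M)$ commutes with the diagonal $gl_N$, so Fact~\ref{gl-gl-duality} shows that $\mathcal B:=\rho(\A(z_1,\dots,z_M))$ is a commutative subalgebra of $\rho'(U(gl_M))$; it remains to prove $\mathcal B=\rho'(\A_Z)$.

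I would first treat the quadratic part. Since the variables carrying the index $b$ commute with those carrying the index $c$ for $b\ne c$, one computes directly that $\rho(\Omega^{(bc)})=\sum_{a,a'}x_{ab}\partial_{x_{a'b}}x_{a'c}\partial_{x_{ac}}=\rho'(E_{bc}E_{cb}-E_{bb})$ for $b\ne c$, and substituting into $H_b=\sum_{c\ne b}\Omega^{(bc)}/(z_b-z_c)$ shows that $\rho(\sum_b h_bH_b)$ coincides with $\rho'(Q_Z(h))$ modulo a linear combination of the $\rho'(E_{bb})$. As the first-order Casimirs $(\tr)^{(b)}=S_1^{b,1}$ lie in $\A(z_1,\dots,z_M)$ with $\rho((\tr)^{(b)})=\rho'(E_{bb})$, while $\hh\subset\A_Z$ for $Z$ regular semisimple, this identifies the quadratic Gaudin hamiltonians with $Q_Z$ modulo $\rho'(U(\hh))$, which is the second assertion; in particular $\rho'(Q_Z)\subset\mathcal B$ and $\mathcal B$ commutes with $\rho'(Q_Z)$. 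For generic $Z$ the algebra $\A_Z$ is, by \cite{Ryb2}, exactly the centralizer of $Q_Z$ in $U(gl_M)$, and $Q_Z$ has simple joint spectrum in every polynomial $gl_M$-module; hence on each $gl_M$-isotypic component of $S(W)$ the centralizer of $\rho'(Q_Z)$ equals $\rho'(\A_Z)$, and since the polynomial $gl_M$-modules occurring in $S(W)$ detect $\A_Z$ faithfully, one obtains $\mathcal B\subset\rho'(\A_Z)$.

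The reverse inclusion is the crux; here I would match explicit generating functions rather than argue via centralizers, as the Gaudin algebra is not known to be the centralizer of its quadratic part. By Talalaev's formula $\A(z_1,\dots,z_M)$ is generated by the coefficients of $\det^{\mathrm{col}}(L(u)-\partial_u)$, $L(u)=\sum_b E^{[b]}/(u-z_b)$ the $gl_N$-Lax matrix; applying $\rho$ and writing $x=(x_{ab})$ this becomes the $N\times N$ column-determinant of $x(u-Z)^{-1}\partial^{T}-\partial_u$. On the other side $\A_Z$ is generated, by the Talalaev-type description of shift-of-argument subalgebras (cf.\ \cite{CT06,Ryb2}), by the coefficients of an $M\times M$ column-determinant attached to $Z$ and $\widetilde E=\sum_{b,c}E_{bc}\otimes e_{bc}$, which under $\rho'$ becomes a determinant in the \emph{same} oscillators $x_{ab},\partial_{x_{ab}}$ (since $\rho'(\widetilde E)=x^{T}\partial$). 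The core of the proof is then a Capelli-type identity in the Weyl algebra $D(W)$ equating these two column-determinants — an $N\times N$ one and an $M\times M$ one in the same variables — up to an explicit scalar factor, so that their coefficients generate one and the same subalgebra of $D(W)$; combined with $\mathcal B\subset\rho'(\A_Z)$ this gives $\mathcal B=\rho'(\A_Z)$. (Alternatively, one may pass to the Bernstein filtration on $D(W)$ and deduce the equality from $\mathcal B\subset\rho'(\A_Z)$ together with the classical Poisson $(gl_N,gl_M)$-duality, which identifies the associated graded algebras with the pullbacks to $\cc[T^{*}W]$ of the Gaudin and the Mishchenko--Fomenko Poisson subalgebras.) The main obstacle is precisely this determinantal identity — equivalently, the assertion that the images of the higher Gaudin hamiltonians exhaust all of $\rho'(\A_Z)$ and not just a subalgebra of it; everything before it is formal once Howe duality is in hand.
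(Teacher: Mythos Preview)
The paper gives no proof of this statement: it is labelled a \emph{Fact}, attributed to \cite{MTV2}, and followed immediately by two remarks and then the next cited Fact. This is how the paper uniformly handles imported results (cf.\ Fact~1, Fact~\ref{gl-gl-duality}, Fact~\ref{shuvalov}), so there is nothing here to compare your proposal against.

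For what it is worth, your outline is broadly in line with the actual argument in \cite{MTV2}: the core is indeed a Capelli-type determinantal identity in $D(W)$ equating the $N\times N$ Talalaev determinant on the Gaudin side with an $M\times M$ determinant on the shift-of-argument side, and Howe duality supplies the ambient inclusion $\mathcal B\subset\rho'(U(gl_M))$. One point to tighten: in your containment $\mathcal B\subset\rho'(\A_Z)$ via the centralizer characterization of $\A_Z$, you implicitly use that $\rho'$ is injective enough to transport the centralizer statement from $U(gl_M)$ to its image; your phrase ``the polynomial $gl_M$-modules occurring in $S(W)$ detect $\A_Z$ faithfully'' is exactly the hypothesis needed, but for $N<M$ not every polynomial $gl_M$-module appears in $S(W)$, so this step is not automatic. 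In \cite{MTV2} both inclusions are obtained simultaneously from the explicit identity rather than from a separate centralizer argument, which sidesteps the issue.
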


{\bf Remark.} The quasi-classical version of this fact goes back
to \cite{AHH90} (see also \cite{GGM97} section 5.4 and especially
formula 5.27 page 23). But the relation with the $(gl_N,\ gl_M)$
duality was not understood and the full picture was not developed.
These facts go back to the well-known in integrability theory fact
that the Toda chain has two Lax representations: one is by
$2\times 2$ matrices another by $n\times n$ matrices.

{\bf Remark.} The $(gl_N,\ gl_M)$ duality for quadratic
hamiltonians was first observed by Toledano Laredo in \cite{TL}.

In \cite{Sh} Shuvalov described the closure of the family of
subalgebras $A_Z\subset S(\gg)$ under the condition
$Z\in\hh^{reg}$ (i.e., for regular $Z$ in the fixed Cartan
subalgebra). In particular, the following assertion is proved
in~\cite{Sh}.

\begin{Fact}\label{shuvalov} Suppose that $Z(t)=Z_0+tZ_1+t^2Z_2+\dots\in\hh^{reg}$
for generic $t$. Set $\z_k=\bigcap\limits_{i=0}^k\z_{\gg}(\mu_i)$
(where $\z_{\gg}(Z_i)$ is the centralizer of $Z_i$ in $\gg$),
$\z_{-1}=\gg$. Then we have
\begin{enumerate}
\item the subalgebra $\lim\limits_{t\to0}A_{Z(t)}\subset S(\gg)$
is generated by all elements of $S(\z_k)^{\z_k}$ and their
derivatives (of any order) along $Z_{k+1}$ for all $k$. \item
$\lim\limits_{t\to0}A_{Z(t)}$ is a free commutative algebra.
\end{enumerate}
\end{Fact}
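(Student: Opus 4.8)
The plan is to realise $\lim_{t\to0}A_{Z(t)}$ as a flat degeneration of the Mishchenko--Fomenko family and to pin it down by a transcendence--degree count, after reducing the whole statement to one elementary degeneration step. For $Z\in\hh^{reg}$ the subalgebra $A_Z\subset S(\gg)$ is the Mishchenko--Fomenko subalgebra: it is the homogeneous subalgebra generated by the coefficients in $a$ of $\Phi_l(x+aZ)$, i.e. by the elements $\partial_Z^{\,j}\Phi_l$ with $l=1,\dots,\rk\gg$, $0\le j<\deg\Phi_l$; it is a free polynomial algebra, and its Poincar\'e series $\prod_l\prod_{i=1}^{\deg\Phi_l}(1-q^i)^{-1}$ is independent of $Z\in\hh^{reg}$. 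A limit of a family of homogeneous subspaces is again homogeneous and has the same dimension in each degree, so $\lim_{t\to0}A_{Z(t)}$ is a homogeneous commutative subalgebra of $S(\gg)$ with precisely this Poincar\'e series. Hence, exactly as in the proof of the Theorem of Section~\ref{sect_limit}, it suffices to establish the inclusion $\supseteq$ (all the claimed generators lie in the limit) together with algebraic independence and the correct degrees of these generators: equality of Poincar\'e series then forces equality of the two algebras, and freeness gives assertion~(2).

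The main tool is a one--step degeneration lemma. Let $\mu_0\in\hh$ and $\nu(\epsilon)=\nu_1+\epsilon\nu_2+\dots\in\hh$ be such that $\mu_0+\epsilon\nu(\epsilon)\in\hh^{reg}$ for generic $\epsilon$, and put $\z_0=\z_\gg(\mu_0)$. Then
$$\lim_{\epsilon\to0}A_{\mu_0+\epsilon\nu(\epsilon)}=\Bigl\langle\ A_{\mu_0}\,,\ \lim_{\epsilon\to0}A^{\z_0}_{\nu(\epsilon)}\ \Bigr\rangle,$$
where $A^{\z_0}_{\nu(\epsilon)}\subset S(\z_0)$ is the Mishchenko--Fomenko subalgebra of the reductive Lie algebra $\z_0$ associated with $\nu(\epsilon)|_{\z_0}$. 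Granting this, one applies it inside $\z_0$ to the tail curve $Z_1+tZ_2+\dots$; since $\z_{\z_0}(Z_1)=\z_\gg(Z_0)\cap\z_\gg(Z_1)$, an induction on $\dim\gg$ unfolds $\lim_{t\to0}A_{Z(t)}$ as $\langle A_{Z_0},\,A^{\z_0}_{Z_1},\,A^{\z_1}_{Z_2},\,\dots\rangle$, which is precisely the subalgebra generated by all $S(\z_k)^{\z_k}$ and their $Z_{k+1}$--derivatives; the chain $\gg=\z_{-1}\supseteq\z_0\supseteq\z_1\supseteq\dots$ stabilizes at $\hh$, so only finitely many terms occur.

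To prove $\supseteq$ in the lemma, the inclusion $A_{\mu_0}\subset\lim$ is immediate, since the generators $\partial_{\mu_0}^{\,j}\Phi_l$ of $A_{\mu_0}$ are the $\epsilon\to0$ limits of the corresponding generators of $A_{\mu_0+\epsilon\nu(\epsilon)}$. For the second factor, evaluate the generating function at $a=\epsilon^{-1}$: the element $\Phi_l(x+\epsilon^{-1}\mu_0+\nu(\epsilon))=\Phi_l\bigl(x+\epsilon^{-1}(\mu_0+\epsilon\nu(\epsilon))\bigr)$ lies in $A_{\mu_0+\epsilon\nu(\epsilon)}$. Since $\ad\mu_0$ is invertible on the sum $\mathfrak{m}$ of the root spaces not centralizing $\mu_0$, one can conjugate by a group element $\exp\eta(\epsilon)$ with $\eta(\epsilon)\in\mathfrak{m}$ and $\eta(\epsilon)=O(\epsilon)\to0$ so as to move the argument into $\epsilon^{-1}\mu_0+\z_0$; invariance of $\Phi_l$ then gives $\Phi_l(x+\epsilon^{-1}\mu_0+\nu(\epsilon))=(\Phi_l|_{\z_0})(\epsilon^{-1}\mu_0+x^\flat+\nu_1+O(\epsilon))$, where $x^\flat$ is the $\z_0$--component of $x$ and $\Phi_l|_{\z_0}\in S(\z_0)^{\z_0}$. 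As $\mu_0$ is central in $\z_0$, the coefficients of the expansion in $\epsilon^{-1}$ are the invariants $\partial_{\mu_0}^{\,j}(\Phi_l|_{\z_0})\in S(\z_0)^{\z_0}$; extracting them inductively from the highest pole order downward (subtracting the already--obtained lower--order terms and rescaling by the appropriate power of $\epsilon$) realizes all of $S(\z_0)^{\z_0}$, and — the argument being shifted by $\nu_1$ — their $\nu_1$--derivatives, as elements of $\lim_{\epsilon\to0}A_{\mu_0+\epsilon\nu(\epsilon)}$; running the same argument with the next tail $\nu_2+\epsilon\nu_3+\dots$ inside $\z_0$ accounts for the remaining generators of $\lim A^{\z_0}_{\nu(\epsilon)}$. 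The most delicate point, and the main obstacle, is exactly this last extraction: the conjugation $\exp\eta(\epsilon)$ must be controlled uniformly, and the passage from ``limits of individual elements'' to ``limit of the subalgebra'' has to be carried out through the Grassmannian description of the limit on each finite--dimensional filtered piece, not on generators alone.

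Finally, for the equality it remains to check that the generators $\partial_{Z_{k+1}}^{\,j}\psi$, $\psi\in S(\z_k)^{\z_k}$, are algebraically independent with degrees matching the Poincar\'e series above. This is a telescoping count: by standard Mishchenko--Fomenko theory the piece $A^{\z_k}_{Z_{k+1}}$ has transcendence degree $\frac12(\dim\z_k-\dim\z_{k+1})+\rk\gg$ (using $\rk\z_k=\rk\gg$ and $\z_{k+1}=\z_{\z_k}(Z_{k+1})$), consecutive pieces share exactly the subalgebra $S(\z_k)^{\z_k}$ of transcendence degree $\rk\gg$, and summing over the chain gives total transcendence degree $\frac12(\dim\gg+\rk\gg)$, the Mishchenko--Fomenko maximum; a parallel bookkeeping of the degrees of the invariants of the Levi subalgebras $\z_k$ identifies the Poincar\'e series with $\prod_l\prod_{i=1}^{\deg\Phi_l}(1-q^i)^{-1}$. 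Combined with $\supseteq$ and the coincidence of the Poincar\'e series of the limit and of $A_{Z(t)}$, this gives both assertions.
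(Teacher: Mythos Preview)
The paper does not contain a proof of this statement: it is presented as a \emph{Fact}, explicitly attributed to Shuvalov \cite{Sh} (``the following assertion is proved in~\cite{Sh}''), and is merely quoted for use in the proof of the subsequent theorem. There is therefore no ``paper's own proof'' against which to compare your proposal.

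For what it is worth, your sketch follows the expected architecture of Shuvalov's argument --- flat degeneration controlled by the Poincar\'e series, a one-step reduction to the centralizer $\z_0=\z_\gg(\mu_0)$, induction down the chain $\gg\supset\z_0\supset\z_1\supset\cdots$, and a telescoping transcendence-degree count --- and is a reasonable outline. The point you yourself flag as ``the most delicate'' is indeed the real issue: in your conjugation step the element $\eta(\epsilon)$ must depend on $x$ as well as on $\epsilon$ (you are moving $x+\epsilon^{-1}\mu_0+\nu(\epsilon)$ into the slice $\epsilon^{-1}\mu_0+\z_0$), so one must check that the resulting expansion is still polynomial in $x$ and that its coefficients are precisely the claimed $\z_0$-invariants and their $\nu_1$-derivatives. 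Shuvalov handles this by a more direct analysis of leading terms in the expansion of $\Phi_l(x+aZ(t))$ rather than a geometric conjugation, but your route can be made rigorous with additional care at exactly this step.
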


This means, in particular, that the $\lim\limits_{z_1\to
z_M}(\dots(\lim\limits_{z_{M-2}\to z_M}(\lim\limits_{z_{M-1}\to
z_M}A_{Z}))\dots)$ for $\gg=gl_M$ is generated by the Poisson
centers of $S(gl_{M-1})\subset S(gl_M)$, of $S(gl_{M-2})\subset
S(gl_{M-1})\subset S(gl_M)$ etc. (this is a maximal commutative
subalgebra known as the Gelfand--Tsetlin algebra). This was
observed earlier by Vinberg in \cite{Vin}, 6.1--6.4.

Let $\A_{G-Ts}\subset U(gl_M)$ be the Gelfand--Tsetlin commutative
subalgebra generated by the center of $U(gl_{M-1})\subset
U(gl_M)$, the center of $U(gl_{M-2})\subset U(gl_{M-1})\subset
U(gl_M)$ etc.

\begin{Theorem} The image of $\A_{\lim}\subset
U(gl_N)^{\otimes M}$ in $\End S(W)$ coincides with
$\A_{G-Ts}\subset U(gl_M)$.
\end{Theorem}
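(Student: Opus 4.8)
The plan is to identify the limit algebra $\A_{\lim}\subset U(gl_N)^{\otimes M}$ with an iterated limit of Gaudin algebras $\A(z_1,\dots,z_M)$, transport this identification through the $(gl_N,gl_M)$ duality of Fact~\ref{gl-gl-duality}, and then match the resulting limit of Mishchenko--Fomenko algebras $\A_Z\subset U(gl_M)$ with the Gelfand--Tsetlin algebra $\A_{G-Ts}$ using Shuvalov's description (Fact~\ref{shuvalov}). First, I would observe that $\A_{\lim}$, as defined just before the Proposition on bending flows, is obtained from $\A(z_1,\dots,z_M)\subset U(gl_N)^{\otimes M}$ by iterating the gluing limit of Section~\ref{sect_limit}: letting $z_{M-1}\to z_M$, then $z_{M-2}\to z_M$, and so on, down to $z_1\to z_M$. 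By the Theorem of Section~\ref{sect_limit} (applied repeatedly, always in the ``most degenerate'' pattern where all moving points collapse together), each such step is a genuine limit of commutative subalgebras in the filtered sense defined there, so $\A_{\lim}=\lim_{z_1\to z_M}(\cdots(\lim_{z_{M-1}\to z_M}\A(z_1,\dots,z_M))\cdots)$.

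Next I would push this chain of limits through the duality. The action of $U(gl_N)^{\otimes M}$ on $S(W)$ factors through $\End S(W)$, and by the cited Fact of Mukhin--Tarasov--Varchenko the image of $\A(z_1,\dots,z_M)$ in $\End S(W)$ equals the image of the Mishchenko--Fomenko subalgebra $\A_Z\subset U(gl_M)$, where $Z=\diag(z_1,\dots,z_M)$. Since passage to the limit commutes with homomorphisms of filtered algebras (stated explicitly in Section~\ref{sect_limit}, and in particular with the representation $U(gl_N)^{\otimes M}\to\End S(W)$ and with the homomorphism $U(gl_M)\to\End S(W)$), the image of $\A_{\lim}$ in $\End S(W)$ equals the image of the iterated limit $\lim_{z_1\to z_M}(\cdots(\lim_{z_{M-1}\to z_M}\A_Z)\cdots)\subset U(gl_M)$. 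Here I must be a little careful: the limit on the $U(gl_N)^{\otimes M}$ side is a single honest filtered limit of commutative algebras, and its image is therefore a commutative subalgebra of $\End S(W)$; I want this image to coincide with the image of the corresponding filtered limit on the $U(gl_M)$ side, which follows because the duality isomorphism of images is compatible with the filtrations and with each elementary gluing step.

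Finally I would identify the iterated limit $\lim_{z_1\to z_M}(\cdots(\lim_{z_{M-1}\to z_M}\A_Z)\cdots)\subset U(gl_M)$ with $\A_{G-Ts}$. The classical (Poisson) analogue is exactly the content of Fact~\ref{shuvalov}: taking $Z(t)$ a generic curve in $\hh^{reg}$ that sends $z_1,\dots,z_{M-1}$ to $z_M$ one after another, the centralizers $\z_k$ become $gl_{M-k}$ (embedded in the standard nested way, since collapsing the first $k$ coordinates of a diagonal matrix leaves the lower-right $(M-k)\times(M-k)$ block regular), and Shuvalov's formula shows $\lim_{t\to 0}A_{Z(t)}$ is generated by the Poisson centers of $S(gl_{M-1})\subset S(gl_{M-2})\subset\cdots$, i.e. it is the classical Gelfand--Tsetlin algebra --- this is precisely the remark already made in the excerpt after Fact~\ref{shuvalov}. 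To lift this to $U(gl_M)$, I would note that by \cite{FFTL,Ryb1} we have $\gr\A_\mu=A_\mu$ for Mishchenko--Fomenko algebras, and that taking associated graded commutes with the filtered limit, so $\gr$ of the quantum iterated limit is the classical Gelfand--Tsetlin algebra; since $\gr\A_{G-Ts}$ is also the classical Gelfand--Tsetlin algebra and $\A_{G-Ts}$ is visibly contained in the quantum iterated limit (each center of $U(gl_{M-k})$ arises as a limit of central elements of $U(gl_M)$ shifted by the degenerating $Z$), a dimension/Poincaré-series count forces equality. \textbf{The main obstacle} I anticipate is the bookkeeping in this last step: making precise that the iterated limit of the $\A_Z$ is taken in the same filtered sense as on the $gl_N$ side, that $\gr$ genuinely commutes with it, and that the nested $gl_{M-k}$'s produced by Shuvalov's centralizer recipe are the \emph{standard} Gelfand--Tsetlin chain rather than some conjugate of it --- i.e. checking that the specific degeneration pattern ``all $z_i\to z_M$'' yields exactly the flag used to define $\A_{G-Ts}$.
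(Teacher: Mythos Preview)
Your overall strategy matches the paper's: reduce via the Mukhin--Tarasov--Varchenko duality to showing that the corresponding iterated limit of $\A_Z\subset U(gl_M)$ is $\A_{G-Ts}$, and invoke Shuvalov/Vinberg (Fact~\ref{shuvalov} and the remark after it) on the Poisson level. The paper's proof is exactly this, in three sentences.

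Where you diverge is the lifting from $S(gl_M)$ to $U(gl_M)$. The paper does not attempt your ``$\gr$ commutes with filtered limits $+$ containment $+$ Poincar\'e series'' argument; instead it simply cites Tarasov \cite{Tar3}, who proved that a maximal Poisson-commutative subalgebra of $S(gl_M)$ of this type has a \emph{unique} lifting to a commutative subalgebra of $U(gl_M)$. Since both $\A_{G-Ts}$ and the quantum iterated limit of $\A_Z$ are commutative liftings of the classical Gelfand--Tsetlin algebra, uniqueness forces them to coincide. This bypasses precisely the ``main obstacle'' you flagged.

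Your direct route is not wrong in spirit, but the two ingredients you lean on are less innocent than they look. First, ``$\gr$ commutes with the filtered limit'' is not automatic for the limit defined in Section~\ref{sect_limit}: one must rule out a drop in leading terms, which in practice requires a Poincar\'e-series comparison of the kind used in Proposition~\ref{tr-deg}. Second, your justification for $\A_{G-Ts}\subset\lim\A_Z$ --- that each center of $U(gl_{M-k})$ ``arises as a limit of central elements of $U(gl_M)$ shifted by $Z$'' --- is not the right mechanism; the generators of $Z(U(gl_{M-k}))$ appear as limits of specific \emph{non-central} elements of $\A_Z$, and exhibiting them explicitly amounts to redoing Shuvalov's computation at the quantum level. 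Both steps can be made to work, but Tarasov's uniqueness theorem gets you there in one line.
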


\begin{proof}
It is sufficient to prove, that the corresponding limit of
$\A_Z\subset U(gl_M)$ is the Gelfand-Tsetlin subalgebra. On the
level of Poisson algebras this is proved in \cite{Vin} and
\cite{Sh}. On the other hand, in \cite{Tar3}, it is proved that
there is a unique lifting of such limit subalgebra to $U(gl_M)$.
Thus, the limit of $\A_Z\subset U(gl_M)$ is $\A_{G-Ts}\subset
U(gl_M)$.
\end{proof}

\section{Simplicity of the joint spectrum of the Gaudin algebras in the $gl_N$ case}\label{sect_simplespec}
Let $V$ be the tautological representation of $\gg=gl_N$ (of the
highest weight $\omega_1$) and
$V_{(\l)}=\bigotimes\limits_{i=1}^{n} S^{m_i}V$ (or, equivalently,
$(\l)=(\l_1,\dots,\l_n)$, where $\l_i=m_i\omega_1,\ m_i\in\zz_+$).
We shall show in this section, that the joint spectrum of the
Gaudin algebra $\A(z_1,\dots,z_n)$ in the space $V_{(\l)}^{sing}$
is simple for generic values of the parameters $z_1,\dots,z_n$.

\begin{Theorem}\label{simple_spec} Suppose $(\l)=(\l_1,\dots,\l_n)$, where $\l_i=m_i\omega_1,\
m_i\in\zz_+$.
The Gaudin algebra $\A(z_1,\dots,z_n)$ has
simple spectrum in $V_{(\l)}^{sing}$ for generic values of the
parameters $z_1,\dots,z_n$.
\end{Theorem}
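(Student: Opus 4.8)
The plan is to deduce the simplicity of the spectrum of $\A(z_1,\dots,z_n)$ in $V_{(\l)}^{sing}$ for generic $z_i$ from the corresponding statement about the most degenerate limit subalgebra $\A_{\lim}$, exactly as in the proof of Proposition~6. Since $\A_{\lim}$ lies in the closure of the family $\{\A(z_1,\dots,z_n)\}$, and since simplicity of spectrum on a fixed finite-dimensional space is an open condition, it suffices to prove that $\A_{\lim}$ acts with simple spectrum on $V_{(\l)}^{sing}=\bigl[\bigotimes_{i=1}^n S^{m_i}\cc^N\bigr]^{sing}$.

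First I would realize $V_{(\l)}$ inside a Schur--Weyl/$(gl_N,gl_M)$-duality setup so that Theorem~4 applies. Put $M=\sum_i m_i$ and consider $W=\cc^N\otimes\cc^M$, so that $S(W)^{sing}$ is, by Fact~2, multiplicity-free as a $gl_M$-module and the image of $\A_{\lim}\subset U(gl_N)^{\otimes M}$ in $\End S(W)$ is the Gelfand--Tsetlin subalgebra $\A_{G-Ts}\subset U(gl_M)$. The representation $V_{(\l)}=\bigotimes_i S^{m_i}\cc^N$ is obtained from $W=(\cc^N)^{\oplus M}$ by first grouping the $M$ copies of $\cc^N$ into $n$ blocks of sizes $m_1,\dots,m_n$ and then taking $S^{m_i}$-invariants in each block, i.e. $V_{(\l)}^{sing}$ is the image in $S(W)^{sing}$ of the isotypic projector for the subgroup $S_{m_1}\times\dots\times S_{m_n}\subset S_M$ acting trivially. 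Correspondingly, the ``grouped'' degenerate algebra $\A_{(z,\dots,z),(z,\dots,z)}$ built by the nested procedure of Section~3 with the points first collapsed inside blocks and then across blocks --- which is precisely the $\A_{\lim}$ relevant to the $n$-fold tensor product of $S^{m_i}V$ --- maps, under $(gl_N,gl_M)$-duality, to the Gelfand--Tsetlin subalgebra $\A_{G-Ts}\subset U(gl_M)$ restricted to the parabolic $U(gl_{m_1}\oplus\dots\oplus gl_{m_n})$-invariants. Since $\A_{G-Ts}$ contains the centers of all $U(gl_{M-1})\subset\dots\subset U(gl_1)$, and the branching $gl_M\downarrow gl_{M-1}\downarrow\dots\downarrow gl_1$ is multiplicity-free, $\A_{G-Ts}$ has simple spectrum on \emph{every} irreducible $gl_M$-module, hence on $S(W)^{sing}$, hence a fortiori on the $S_{m_1}\times\dots\times S_{m_n}$-invariant subspace $V_{(\l)}^{sing}$.

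The one point that genuinely needs care --- and which I expect to be the main obstacle --- is matching the abstract limit subalgebra $\A_{\lim}\subset U(gl_N)^{\otimes n}$ attached to the tensor product $\bigotimes_i S^{m_i}V$ with a degeneration of $\A(z_1,\dots,z_M)\subset U(gl_N)^{\otimes M}$ that is covered by Theorem~4 and Shuvalov's Fact~5. Concretely, one must check that collapsing the $M$ Gaudin points in the order ``first within each of the $n$ blocks, then the $n$ blocks together'' produces, on the $U(gl_M)$-side, exactly the chain of centralizer conditions $\z_k=\bigcap_{i\le k}\z_{gl_M}(Z_i)$ whose limit Shuvalov and Vinberg identify with $\A_{G-Ts}$ --- i.e. that the associated flag of centralizers is the full flag $gl_M\supset gl_{M-1}\supset\dots\supset gl_1$ and not a coarser parabolic flag. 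This is a bookkeeping statement about which diagonal matrices $Z_0,Z_1,\dots$ arise from the chosen $s\to0$ scaling, together with the remark that on $V_{(\l)}^{sing}$ we only see the $S_{m_1}\times\dots\times S_{m_n}$-invariants, so it is harmless if the ordering inside a block is partly degenerate. Once this identification is in place, simplicity on $V_{(\l)}^{sing}$ is immediate from the multiplicity-freeness of Gelfand--Tsetlin branching, and the openness argument of Proposition~6 transports it to generic $z_1,\dots,z_n$, completing the proof.
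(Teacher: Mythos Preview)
Your overall strategy---reduce to $\A_{\lim}$, invoke Gelfand--Tsetlin simplicity via $(gl_N,gl_M)$-duality, then use openness---matches the paper exactly. The difference, and the source of the obstacle you correctly flag, is your choice $M=\sum_i m_i$. The paper instead takes $M=n$.

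With $M=n$ the embedding is immediate: $S(W)=S\bigl(\cc^N\otimes\cc^n\bigr)=\bigotimes_{i=1}^{n}S(\cc^N)$ as a $U(gl_N)^{\otimes n}$-module, and $V_{(\l)}=\bigotimes_{i=1}^{n}S^{m_i}\cc^N$ sits inside as the multidegree-$(m_1,\dots,m_n)$ summand. Now $\A_{\lim}\subset U(gl_N)^{\otimes n}$ is literally the algebra to which Theorem~4 applies, so its image in $\End S(W)$ is $\A_{G\text{-}Ts}\subset U(gl_n)$ with no further matching required. Since $\A_{G\text{-}Ts}$ has simple spectrum on every irreducible $gl_n$-module and $S(W)^{sing}$ is multiplicity-free (Fact~2), $\A_{\lim}$ has simple spectrum on $S(W)^{sing}$, hence on its $\A_{\lim}$-stable subspace $V_{(\l)}^{sing}$.

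Your route with $M=\sum_i m_i$ forces you to compare the $n$-point $\A_{\lim}$ (block-diagonally embedded in $U(gl_N)^{\otimes M}$) with some two-stage degeneration of the $M$-point Gaudin family. That two-stage limit on the $gl_M$ side does \emph{not} a priori give the full flag: collapsing first within blocks and then across blocks produces, via Shuvalov's description, a flag whose first Levi is $gl_{m_1}\oplus\dots\oplus gl_{m_n}$, not $gl_{M-1}$. Your remark that the within-block generators act as scalars on $V_{(\l)}$ (by Schur--Weyl, since $S^{m_i}\cc^N$ is the trivial $S_{m_i}$-isotype) is the right repair and would eventually close the gap, but it is extra work. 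The paper's choice $M=n$ simply sidesteps the whole issue.
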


\begin{proof} The Gelfand--Tsetlin subalgebra in $U(gl_M)$ has
simple spectrum in any irreducible representation of $gl_M$. Since
the image of $\A_{G-Ts}\subset U(gl_M)$ coincides with
$\A_{\lim}$, and $S(W)^{sing}$ is multiplicity-free as
$gl_M$-module, the algebra $\A_{\lim}$ has a simple spectrum in
$S(W)^{sing}$.

Note that the representation $V_{(\l)}$ occurs in $S(W)$. Thus the
algebra $\A_{\lim}$ has simple spectrum in $V_{(\l)}^{sing}$.

Since the subalgebra $\A_{\lim}$ belongs to the closure of the
family of Gaudin subalgebras $\A(z_1,\dots,z_n)$, for generic
values of $z_i$ the algebra $\A(z_1,\dots,z_n)$ has simple
spectrum in $V_{(\l)}$ as well.
\end{proof}

\end{document}